\documentclass[11pt]{amsart}
\textwidth     15.cm
\oddsidemargin   .4cm
\evensidemargin  .4cm
\parskip 6pt

\usepackage{amssymb,amsmath}
\usepackage{graphicx}
\usepackage{datetime}

\newcommand{\R}{\ensuremath{\mathbb R}}
\newcommand{\N}{\ensuremath{\mathbb N}}

\theoremstyle{plain}
\newtheorem{theorem}{Theorem}[section]

\newtheorem{proposition}[theorem]{Proposition}

\newtheorem{exam}[theorem]{Example}
\newtheorem{defn}[theorem]{Definition}
\newtheorem{rem}[theorem]{Remark}
\newtheorem{rems}[theorem]{Remarks}

\theoremstyle{definition}

\numberwithin{equation}{section}

\begin{document}

\title{Piecewise smooth systems near a \\
co-dimension 2 discontinuity manifold: \\
can one say what should happen?}
\author{Luca Dieci}
\address{School of Mathematics \\ Georgia Tech \\
Atlanta, GA 30332 U.S.A.}
\email{dieci@math.gatech.edu}
\author{Cinzia Elia}
\address{Dipartimento di Matematica, Univ. of Bari, I-70100, Bari,
Italy}
\email{cinzia.elia@uniba.it}
\thanks{This work was begun while the second author was
visiting the School of Mathematics of Georgia Tech, whose hospitality is
gratefully acknowledged, and the author would also like to acknowledge the
support of the GNCS for the airfare.
The first author also gratefully acknowledges the support provided by
a Tao Aoqing Visiting Professorship at Jilin University, Changchun (CHINA)}

\subjclass{34A36, 65P99}

\keywords{Piecewise smooth systems, Filippov convexification, 
co-dimension 2 discontinuity manifold, Runge Kutta methods, regularization}

\begin{abstract}
We consider
a piecewise smooth system in the neighborhood of
a co-dimension 2 discontinuity manifold $\Sigma$ (intersection of two co-dimension 1 manifolds).  
Within the class of Filippov solutions, if $\Sigma$ is attractive,
one should expect solution trajectories to {\emph{slide}} on $\Sigma$.  It is well
known, however, that the classical
Filippov convexification methodology does not render a uniquely defined sliding vector
field on $\Sigma$.  The situation is further complicated by the possibility that, 
regardless of how sliding on $\Sigma$ is taking place, during sliding
motion a trajectory encounters so-called {\emph{generic first order exit points}}, where
$\Sigma$ ceases to be attractive.  

In this work, we attempt to understand what behavior one should
expect of a solution trajectory near $\Sigma$ when $\Sigma$ is attractive,
what to expect when $\Sigma$ ceases to be attractive (at least, at generic exit points), 
and finally we also contrast and compare the behavior of some
regularizations proposed in the literature,
whereby the original piecewise smooth system is replaced --in a neighborhood of
$\Sigma$-- by a smooth differential system.

Through analysis and experiments in $\R^3$ and $\R^4$,
we will confirm some known facts, and provide some important
insight: (i) when $\Sigma$ is attractive, a solution trajectory indeed does remain near
$\Sigma$, viz. sliding on $\Sigma$ is an appropriate idealization (of course, in general,
one cannot predict which sliding vector field should be selected);
(ii) when $\Sigma$ loses attractivity (at first order exit conditions), a typical
solution trajectory leaves
a neighborhood of $\Sigma$; (iii) there is no obvious way to
regularize the system so that the regularized trajectory will remain near $\Sigma$
as long as $\Sigma$ is attractive, and so that it will be leaving (a neighborhood of)
$\Sigma$ when $\Sigma$ looses attractivity.

We reach the above conclusions by considering exclusively the given piecewise smooth
system, without superimposing any assumption on what kind of dynamics near $\Sigma$
(or sliding motion on $\Sigma$) should have been taking place.  The only datum for us
is the original piecewise smooth system, and the dynamics inherited by it.
\end{abstract}

\maketitle

\pagestyle{myheadings}
\thispagestyle{plain}
\markboth{LUCA DIECI AND CINZIA ELIA}{WHAT SHOULD HAPPEN?}

\section{Introduction}\label{Intro_section}
Consider the following piecewise smooth (PWS) system:
\begin{equation}\label{GeneralPWS}
\dot x=f(x)\ ,\,\ f(x)=f_i(x)\ ,\,\ x\in R_i\ ,\, i=1,2,3,4,
\end{equation}
for $t\in [0,T]$, and where, for $i=1,2,3,4$,
$R_i\subseteq \R^n$ are open, disjoint and connected sets, and
$\R^n=\overline{\bigcup_{i} R_i}$.  
System \eqref{GeneralPWS} is subject to initial condition $x(0)=x_0$, prescribed
in one of the regions $R_i$'s.  In \eqref{GeneralPWS}, for any $i=1,2,3,4$,
each $f_i$ is smooth in $R_i$, so that there is a classical solution in
each region $R_i$, but the solution is not properly defined on the
boundaries of these regions.  We assume that these regions
are separated (locally) by an implicitely defined smooth manifold
$\Sigma$ of co-dimension $2$.  That is, we have
\begin{equation}\label{Sigma}
\Sigma=\{x\in \R^n \ :\, h(x)=0 \,,\quad h:\ \R^n \to \R^2 \}\,,
\end{equation}
and for all $x\in \Sigma$:
$h(x)=\begin{bmatrix}h_1(x) \\ h_2(x)\end{bmatrix}$, $\nabla h_j(x)\ne 0$,
$h_j \in {\mathcal C}^k, \ k\ge 2$, $j=1,2$,
and $\nabla h_1(x), \nabla h_2(x)$, are linearly independent on (and
in a neighborhood of) $\Sigma$.
It is useful to think of $\Sigma=\Sigma_1\cap \Sigma_2$, where 
$\Sigma_1=\{ \ x :\ h_1(x)=0\}$,
and $\Sigma_2=\{\ x:\ h_2(x)=0\}$, are co-dimension 1 manifolds.
Finally, without loss of generality we will henceforth use the
following labeling of the four regions $R_i$, $i=1,2,3,4$:
\begin{equation}\begin{split}\label{RegionsCod2}
& R_1:\quad \text{when} \quad h_1<0\ ,\, h_2<0\ ,\qquad 
R_2: \quad \text{when}\quad h_1<0\ ,\, h_2>0\ , \\
& R_3: \quad \text{when} \quad h_1>0\ ,\, h_2<0\ ,\qquad 
R_4: \quad \text{when}\quad h_1>0\ ,\, h_2>0\ .
\end{split}\end{equation}
For later use, we will also adopt the notation
$\Sigma_{1,2}^\pm$ to denote the set of 
points $x \in \Sigma_{1}$ or $\Sigma_2$, for which we also have
$h_{2}(x)\gtrless 0$ or $h_{1}(x)\gtrless 0$.  E.g., 
$\Sigma_{1}^+=\{x\in \Sigma_1 \,\,\ \text{and} \,\,\ h_2(x)>0\}$.
Finally, we will denote with
\begin{equation}\label{Wij}
w^i_j(x)=\nabla h_i(x)^\top f_j(x)\ ,\,\, i=1,2,\ j=1,2,3,4\ ,
\end{equation}
the projections of the vector fields in the normal directions to the manifolds.

For \eqref{GeneralPWS}, a classical solution in general cannot exist on the
boundaries of the given regions, and several concepts of generalized solution have been
proposed during the years (see \cite{Cortes} for a beautiful exposition on different
solutions concepts).  We will restrict attention to Filippov solutions,
\cite{Filippov}, consisting of absolutely continuous functions whose derivative
is in the convex hull of the neighboring vector fields almost everywhere.

\subsection{Co-dimension 1}
In the case of one single discontinuity manifold of co-dimension 1, Filippov methodology
has provided a widely accepted mathematical framework to understand motion on the discontinuity
surface.  

Consider a discontinuity manifold $\Sigma=\{x\in \R^n:\ h(x)=0\ ,\,\ h:\R^n\to \R\ \}$, separating
two regions $R_1$ (where $h(x)<0$)  and $R_2$ (where $h(x)>0$),
with respective vector fields $f_1$ and $f_2$.  Assuming that
$\Sigma$ is attractive, a condition that is satisfied when
$$\nabla h^T f_1 >0 \quad \text{and} \quad \nabla h^T f_2 <0\ ,\,\ x\in \Sigma \ ,$$
then {\emph{sliding motion}} on $\Sigma$ takes place with vector field
\begin{equation}\label{FiliCod1}
f_{\text{F}} := 
(1-\alpha)f_1+\alpha f_2\ ,\qquad \alpha=\frac{\nabla h^T f_1}{\nabla h^T (f_1-f_2)}\ .
\end{equation}
Filippov theory provides also {\emph{first order exit conditions}}: if one of
$f_1$ or $f_2$ (but not both) become tangent to $\Sigma$, then $\alpha=0$ or $1$, $\Sigma$
loses attractivity, and
the solution trajectory generically will leave $\Sigma$ tangentially (and smoothly)
to enter in $R_1$ with
vector field $f_1$, or in $R_2$ with vector field $f_2$.
Furthermore, it has been understood for a long time that the limiting behavior of the iterates
obtained with Euler method near $\Sigma$ leads to the selection of the Filippov sliding
vector field itself (e.g., see \cite{Utkin.2}, Chapter 3, Section 1.1) 
whenever $\Sigma$ is attractive, and that the Euler
iterates leave a neighborhood of $\Sigma$ when $\Sigma$ loses attractivity. 

\subsection{Co-dimension 2}
However, when $\Sigma$ is of the form \eqref{Sigma}, an obvious lack of uniqueness (in general) arises
in the construction of a Filippov vector field sliding on $\Sigma$.
In fact, on $\Sigma$, Filippov methodology now
leads to the requirement that the vector field satisfies the following (for positive
values of $\lambda_1,\dots, \lambda_4$):
\begin{equation}\label{FiliCod2}\begin{split}
f_{\text{F}} & := \lambda_1f_1+\lambda_2 f_2+\lambda_3 f_3+\lambda_4 f_4 \ ,\quad \text{where} \\
& \begin{bmatrix} w^1_1 & w^1_2 & w^1_3 & w^1_4 \\ w^2_1 & w^2_2 & w^2_3 & w^2_4 \\ 1 & 1& 1& 1 \end{bmatrix}
\begin{bmatrix} \lambda_1 \\ \lambda_2 \\ \lambda_3 \\ \lambda_4 \end{bmatrix} = 
\begin{bmatrix} 0 \\ 0 \\ 1 \end{bmatrix}\ ,
\end{split}\end{equation}
which is clearly an underdetermined system of equations.  Indeed, even when $\Sigma$ is attractive,
in general \eqref{FiliCod2} has a one parameter family of solutions and hence
of possible Filippov sliding vector fields.

\begin{rems}$\,$
\begin{itemize}
\item[(i)]
Characterization of attractivity of $\Sigma$ in the present co-dimension 2 case is considerably
more elaborate than in the case of co-dimension 1.  We will assume that $\Sigma$ is either
{\emph{attractive by subsliding}}
(see \cite{DiElLo} ) or {\emph{attractive by spiralling}} (see \cite{DieciSpiral}), in the
form recalled below in Definition \ref{AttrSigma}.
\item[(ii)]
In the present case,  the general lack of uniqueness is not resolved by considering the limiting
behavior of the Euler iterates near $\Sigma$.  In fact, as already noted in \cite{Filippov}, 
the limit of the Euler iterates selects one specific element of the one-parameter family 
of Filippov solutions.
\end{itemize}
\end{rems}

\begin{defn}\label{AttrSigma}
$\Sigma$ (or a portion of it) is {\emph{attractive upon sliding}} if it is reached in finite time
by solution trajectories for any given nearby initial condition, and further there is sliding motion
towards $\Sigma$ along at least one of the  $\Sigma_{1,2}^+$.   When there is sliding motion towards
$\Sigma$ along all of the $\Sigma_{1,2}^\pm$'s then we say that $\Sigma$ is {\emph{nodally attractive}}.
$\Sigma$ (or portion of it) is {\emph{attractive by spiralling}} if $\Sigma$ is reached in finite time
by trajectories for any given nearby initial condition, and there is clockwise or counter
clockwise motion around $\Sigma$ (and no sliding on $\Sigma_{1,2}^\pm$) for the
functions $h_1(x)$ and $h_2(x)$.
\end{defn}

When $\Sigma$ is attractive, in the literature there have been at least two systematic proposals
to select the coefficients $\lambda_i$'s in \eqref{FiliCod2},
leading to sliding vector fields of Flilippov type on $\Sigma$: the 
{\emph{bilinear}} and the {\emph{moments}} vector fields.  The former has been extensively
studied,  see \cite{AlexSeid1,AlexSeid2, DiElLo, Jeffrey1} for example,  
and it consists in choosing the sliding vector field in the form
\begin{equation}\label{bilinear}\begin{split}
f_B&:=(1-\alpha)[(1-\beta)f_1+\beta f_2]+\alpha[(1-\beta)f_3+\beta f_4]\ ,\\
(\alpha,\beta):& \, 
\begin{bmatrix} w^1_1 & w^1_2 & w^1_3 & w^1_4 \\ w^2_1 & w^2_2 & w^2_3 & w^2_4 \end{bmatrix}
\begin{bmatrix} (1-\alpha)(1-\beta) \\ (1-\alpha)\beta \\ \alpha(1-\beta) \\ \alpha\beta \end{bmatrix} = 
\begin{bmatrix} 0 \\ 0 \end{bmatrix}\ .
\end{split}\end{equation}
The moments method has been recently introduced in \cite{DiDif2} and consists in solving the linear
system in \eqref{FiliCod2}, by appending to it the extra relation
\begin{equation}\label{moments}
\begin{bmatrix} d_1 & -d_2 & -d_3 & d_4 \end{bmatrix}
\begin{bmatrix} \lambda_1 \\ \lambda_2 \\ \lambda_3 \\ \lambda_4 \end{bmatrix} = 0\ ,\quad d_i=\|w_i\|_2\ ,
\,\ i=1,2,3,4\ .
\end{equation}
The two choices above generally lead to distinct solution
trajectories, a chief difference between them being the
behavior of the bilinear and moments trajectories at so-called 
{\emph{generic tangential exit points}}.  These were introduced in \cite{DiElLo}, and
are values of $x\in \Sigma$, where one (and just one) of the sliding vector fields 
on $\Sigma_1^\pm$ or $\Sigma_2^\pm$ is itself tangent to $\Sigma$ ({\emph{exit vector field}}).   
The moments vector field automatically aligns with the exit vector field, whereas the bilinear
vector field does not.

An important question, and one which we will try to answer in this work is:
what should happen to trajectories of \eqref{GeneralPWS} 
in a neighborhood of $\Sigma$, when $\Sigma$ loses attractivity (according to
generic first order exit conditions)?

\subsection{Regularize}
If {\emph{natura non facit saltum}}\footnote{Linnaeus' Philosophia Botanica, 1751}, then
\eqref{GeneralPWS} is either a description of an innatural system or a wrong 
model.  Probably, whenever it is set forth, it is neither innatural nor wrong, though it
may be a little of both things, perhaps because the model should be complemented by
some missing information (the 20 years old exposition of Seidman in \cite{Seid2} is still
worthwhile reading).   Regardless, the above 18th century
motto suggests considering a regularized version of \eqref{GeneralPWS}, by
replacing it with a smooth differential system.  Of course, we must assume that we do not
have knowledge of where \eqref{GeneralPWS} comes from, if from anywhere at all,
otherwise we should surely use this knowledge.  However, in the absence of further insight,
it is not obvious how one should globally regularize the system, and
several possibilities for globally regularizing the system have been proposed
in the literature.  

Arguably, the most studied regularization techniques are what we may call the 
``singular perturbation'' and ``sigmoid blending'' techniques.
The paper \cite{SotomayorTeixeira} was the first seminal work on the singular perturbation
approach, followed by the more recent works \cite{Llibre.Silva.Teixeira}, as well as
\cite{DiGu}, \cite{GuHairer}, \cite{Jeffrey1} and, in the context of gene regulatory networks, 
\cite{Plahte.Kjoglum_2005}. 
The first systematic exposition of blending was the
beautiful work \cite{AlexSeid1}.  But, in the end, these techniques are all rather
similar and amount to a regularization of the bilinear vector field.
This can be done locally, just in a neighborhood
of $\Sigma$, say using a cutoff function (as we do in Section \ref{Br_section}), or more
globally, perhaps through use of hyperbolic $\arctan$ functions to connect different vector fields.

In this work, we are exclusively concerned with local regularizations,
hence those that alter the given problem only in a neighborhood of $\Sigma$.
In this case, 
the above mentioned regularization proposals share some common traits, the most important ones being that,
outside of a neighborhood of $\Sigma$, the regularized vector field effectively reduces to the
original vector fields, and that the
regularization depends on a small parameter (or several small parameters) in such a way that as
the parameter(s) go to $0$, the neigborhood collapses onto $\Sigma$.  
The first fact is surely a reasonable property, 
since, away from $\Sigma$, there is a well defined smooth vector field depending
on where the trajectory is, be it one of the original $f_{1,2,3,4}$ or one of the Filippov
sliding vector fields on the surfaces $\Sigma_{1,2}$.   The second fact may be a bit more
controversial, since the regularized trajectory will typically select a specific sliding motion on 
$\Sigma$ (when $\Sigma$ is attractive),
as the neighborhood collapses onto $\Sigma$; however, as it is well understood,
and as we will also see, different regularizations do behave differently.  
As noted by Utkin ( \cite{Utkin.2} ), given that in a
neighborhood of $\Sigma$ there are non-unique dynamics, the inherited dynamics on $\Sigma$
(sliding motion) does depend on the choice of regularization. 
But, this being the case, is there an appropriate way to evaluate different
regularizations?  To answer this question, we must first
decide how we should evaluate dynamics.  

\subsection{Evaluate dynamics}
The first observation  
is that when we evaluate the dynamics of \eqref{GeneralPWS} we should
distinguish between the two cases (i) and (ii) below.
\begin{itemize}
\item[(i)] 
The PWS smooth system \eqref{GeneralPWS} is just a ``convenient'' formalism: there is
a ``true'' smooth system, defined globally, but
it is simpler to replace it by a PWS one.  For example, this is the case for
problems arising in gene regulatory networks (\cite{Machina.Edwards}, \cite{DelBuono.Elia.Lopez}).
In these cases, one knows what is the desired behavior in a neighborhood of $\Sigma$:
it is the behavior of the original problem!  
However, one must be careful in replacing the true system by \eqref{GeneralPWS}, since
it is not clear that the dynamics of the purely PWS system reflect
the dynamics of the original problem; this was already noted in works such as 
\cite{Polynikis.Hogan.diBernardo} and \cite{Seid2} relative to a discontinuity surface of co-dimension 1.  
But, doesn't this discrepancy mean that representing the original problem as a PWS
one was not an appropriate modeling simplification in the first place?
\item[(ii)]  
The problem arises as piecewise smooth problem, or we do not have sufficient
knowledge of an underlying ``true'' problem (if any);  for example, in bang-bang
control, or in dry friction models.
In our opinion, in these cases when there is no (knowledge of an) underlying
``true'' dynamics that one is trying to recover,
the choices we make must be consistent with the PWS formulation.   
This is the case in which we are interested.
\end{itemize}

The above consideration (ii) leads us to restrict to the model
\eqref{GeneralPWS} as {\bf the system} we are given and it is this system that
we will study.   
This realization motivates us (and it has motivated us for several years) to
look at the global properties of the discontinuity surface, and to decide on
what is appropriate based on whether or not the surface attracts the
dynamics of the PWS system for initial conditions off the discontinuity surface itself.
In our opinion, it is not easy to justify
studying \eqref{GeneralPWS} by assuming a specific form of an
underlying system from where \eqref{GeneralPWS} arises as some form of limiting process.  
In other words,
whereas it is surely legitimate (and by no means trivial)
to study the limiting behavior of a specific choice
of regularized vector field defined in a neighborhood of $\Sigma$, this may actually 
have a restricted scope of applicability compared to \eqref{GeneralPWS}.   
As already noted by Utkin (see \cite[p.44]{Utkin}), once one has
replaced \eqref{GeneralPWS} with a smoothed version of it,
the stability of sliding motion on $\Sigma$ is inherited by
that of the dynamics of the regularized field.  But, is this consistent with
the formulation \eqref{GeneralPWS}?

For the given PWS system \eqref{GeneralPWS}, we believe that
it is its dynamics near the discontinuity manifold that determines the appropriate
behavior of a trajectory.   This dynamics is what we will try to capture in this work.
Moreover, our main interest is in
the case when the discontinuity manifold transitions from being attractive to not attractive for
the trajectories of
the original discontinuous system: in this case, will (or should) a trajectory (even an ideal trajectory,
sliding on the manifold) feel this loss of attractivity, and hence leave a neighborhood of $\Sigma$?

Therefore, without assuming any form of idealized motion on $\Sigma$, we can reformulate our 
main task as follows: 

\centerline{{\emph{
``how should we evaluate the dynamics of \eqref{GeneralPWS} in a neighborhood of $\Sigma$?''}}}

\noindent
In principle, one may want to do this by studying the dynamics of a regularized problem,
and we have already mentioned some possibilities, such as sigmoid blending and singular perturbation
techniques.  E.g., see \cite{AlexSeid1, DiGu, GuHairer, Jeffrey1, Llibre.Silva.Teixeira, SotomayorTeixeira}.  
We emphasize once more that these choices (as noted by Alexander and Seidman for blending,
and by Teixeira et alia. for singular parturbation) remove the
ambiguity of how sliding on $\Sigma$ occurs, but these choices are effectively modeling
assumptions, and we should ask if they render
a behavior of the dynamics on $\Sigma$ that is consistent with that of \eqref{GeneralPWS}.

Other possibilities have also been set forth in the literature, see \cite{AlexSeid1} for
further references.

\begin{itemize}\label{Possibilities}
\item[(a)]
Euler broken line approximation.  This is simple to do, and it consists in replacing
\eqref{GeneralPWS} by a Euler method approximation with constant stepsize, call
it $\tau$.   We have experimented extensively with this technique, see below.
[The eventuality, of probability 0, that an Euler iterate lands exactly on
a discontinuity surface  
can be handled in different ways; e.g., by
randomly selecting one of the neighboring vector fields, or by retaining the last vector
field used.] 
\item[(b)]
Hysteresis (or delay) approximation.  This approach appears 
in \cite{Utkin.2} for a surface of co-dimension 1.
For the case of $\Sigma$ of co-dimension 2,
it has been studied first in \cite{AlexSeid2} and then in \cite{DiDif1}, always in the
case of nodally attractive $\Sigma$.  The idea here
is that one has a region $U_\epsilon$ around $\Sigma$ (called a chatterbox in \cite{AlexSeid2}),
and uses the same vector field, say  $f_1$, not only in the region $R_1$, but until the
boundary of $U_\epsilon$ in a different region is reached; at that point, a switch to
the appropriate vector field in the new region is performed.  The rationale for this approach
is that one does not notice immediately that a discontinuity surface is reached, but there
is a ``delay'' in appreciating this fact.
\item[(c)]
Replacing \eqref{GeneralPWS} with a stochastic DE of Ito type:
$\dot x=f_i(x)dt+\epsilon dw_t$.  Again we note that there is zero probability of landing
exactly on the discontinuity surface(s).  The interesting feature of this approach is that
it is bound to sample different vector fields around $\Sigma$.  The 
disadvantage is that it makes quantitative predictions possible only in 
a statistical sense.
\end{itemize}

With the exception of (c),  the other choices above effectively replace the original PWS with another
deterministic dynamical system, possibly discrete (as in the case of Euler method).  But, unfortunately,
these new systems have their own dynamics, and it is unclear whether or not these are consistent with that
of \eqref{GeneralPWS}.  See Section \ref{Numerics_section} for ample illustration of this fact.
At the same time, each of the cases above has some distinguished features, that
we will attempt to retain.

\subsection{Proposal and Plan} 
Our proposal to evaluate the dynamics
of \eqref{GeneralPWS} in a neighborhood of $\Sigma$ is to: 

\centerline{{\emph{``consider the Euler iterates
with random steps,''}}}

\noindent
with steps uniformly distributed about a reference stepsize.
In other words, we want to retain the
simplicity of looking at Euler iterates, but aim at retaining a certain amount of
randomness in the process to avoid getting trapped by the purely Euler dynamics.
Furthermore, in this work we will restrict to well-scaled vector fields $f_i$,
$i=1,\dots, 4$, with none of the $w^i_j$'s in \eqref{Wij} exceeding $1$ in absolute value.  
The reason for this is to avoid the numerical trajectory going too far away
from $\Sigma$, and at the same time to attempt retaining the flavor of a hysteretical trajectory.

We will complement our experiments made with the above strategy, by also using
other approaches.  For example, Euler method with constant stepsizes, singular perturbation
regularizations, and also numerical integration of \eqref{GeneralPWS}
performed with variable stepsize integrators.  

We emphasize that our goal is to reach some conclusions insofar as 
what should happen to a solution trajectory of
\eqref{GeneralPWS} in a (small) neighborhood of $\Sigma$.  We are not
comparing methods, or different recipes of ideal sliding motion, but
simply trying to evaluate the dynamics of \eqref{GeneralPWS} in the most plausible
and honest way we can think of, without superimposing on \eqref{GeneralPWS} any extra
modeling assumption.

Finally, one more caveat.  Our examples are all of systems in $\R^3$ and $\R^4$, and the
discontinuity surfaces $\Sigma_1$ and $\Sigma_2$ are
planes; this makes it easier to visualize and
understand things.  Higher dimensional state space, and non-planar discontinuity surfaces,
can surely bring new phenomena into play, but we have no reason to suspect that the basic
picture that emerges in our study, with the dichotomy between attractivity and lack of
attractivity of $\Sigma$, will be modified substantially.  

The remainder of this work is structured as follows.  In Section \ref{Br_section}, we consider
a prototypical regularization of the bilinear vector field \eqref{bilinear}, and give sufficient
(and sharp) conditions guaranteeing that the regularized solution converges to a sliding solution
on $\Sigma$ according to \eqref{bilinear}.  In Section \ref{Euler_section}, we give some details
of how we implemented the above mentioned proposal, particularly of what practical criteria we adopted
to detect ``exiting'' from a neighborhood of $\Sigma$.  In Section \ref{Numerics_section},
we illustrate through several examples the different things that can happen, and their
dependence on the adopted simulation choice.

\section{Bilinear interpolant regularization: solutions behavior and
fast slow dynamics}\label{Br_section}

Space regularizations are often employed in literature as an analytical mean to model
the switching mechanism of a discontinuous system, see 
\cite{AlexSeid2, DiGu, GuHairer, Jeffrey1, Llibre.Silva.Teixeira, SotomayorTeixeira, Utkin}.  
Typically, these regularizations are one parameter families of vector fields with different time
scales in a neighborhood of the sliding surface 
$\Sigma$, namely a slow dynamics tangent to $\Sigma$ and a fast one normal to $\Sigma$. 
In what follows, we consider regularizations for Filippov discontinuous systems  
as in \eqref{GeneralPWS}, but other approaches are available in the literature for 
non-smooth systems that are not of Filippov's type, for example control systems with nonlinear 
control, as in \cite{Utkin, Utkin.2}. When the regularization parameter  goes to zero, the 
regularized solution converges to a solution of Filippov's differential 
inclusion (\cite[Theorem 1, \S 8]{Filippov}). 
It follows that, if $\Sigma$ is a codimension $1$ discontinuity surface, 
for any regularization that satisfies the assumptions of  
\cite[Theorem $1$, $\S 8$]{Filippov}, the corresponding solution will converge to the unique sliding 
Filippov solution on $\Sigma$ as the regularization parameter goes to zero. 
But if $\Sigma$ has codimension $k \geq 2$, then the ambiguity of Filippov's
selection will reflect also in the amibiguity of the limit of regularized solutions
(in other words, the limit will depend on the chosen regularization).

Our goal in this section is to study the limiting behavior of  the solutions of 
a certain regularization, namely the bilinear interpolant, or simply bilinear,
regularization \eqref{bilinear_reg} below.
This regularization (or a close relative)
has often been employed and studied in the literature;
see \cite{AlexSeid1, AlexSeid2, Seid1, Seid2, DiGu, Jeffrey1, GuHairer, Llibre.Silva.Teixeira}.  
More specifically, we will study when the solution
of the bilinear regularization converges to the solution of a particular selection of 
the Filippov's vector field: the sliding bilinear vector field \eqref{bilinear}. 
When $\Sigma$ is nodally attractive,  
this convergence has been shown in \cite[Theorem 5.1]{AlexSeid1}, and -under the same assumption-
in \cite{DiGu} it is shown that the bilinear regularized vector field converges to the
bilinear sliding vector field. 
In what follows, we relax the hypothesis on attractivity of $\Sigma$, and simply
assume that $\Sigma$ is
\emph{attractive in finite time}, i.e. all trajectories with initial conditions in a neighborhood
of $\Sigma$ will reach $\Sigma$ in finite time.  This can be achieved  
either  upon sliding along $\Sigma_1$ and/or $\Sigma_2$ ($\Sigma$ is attractive upon sliding),
or spiralling around $\Sigma$ ($\Sigma$ is spirally attractive); 
see Definition \ref{AttrSigma}.
For later reference, and under the stated attractivity assumptions of $\Sigma$, we note that 
the algebraic system \eqref{bilinear} has a unique solution $(\alpha,\beta)$ in $(0,1)^2$.

For simplicity\footnote{a fact that can be always locally guaranteed},
we assume that $\Sigma$ is the intersection of the hyperplanes 
$\Sigma_1=\{x \in \R^n | x_1=0\}$ and  $\Sigma_2=\{x \in \R^n | x_2=0\}$. 
Then, we consider the $\epsilon$-neighborhood $S$ of $\Sigma$:
$S=\{x_1,x_2:\ -\epsilon\le x_1,x_2\le \epsilon\}$, 
and two smooth (at least $C^1$) monotone functions (the functions of course
depend on $\epsilon$, but for notational simplicity this 
dependence on $\epsilon$ is omitted)
$\alpha,\beta\ :\ \R \to [-1,1]$ interpolating at
$\pm \epsilon$, as follows:
\begin{equation}\label{conditions}
\alpha(\epsilon)=\beta(\epsilon)=1, \,\  
\alpha(-\epsilon)=\beta(-\epsilon)=0, \,\ 
\alpha'(x_1),\beta'(x_2)>0, \text{ for }-\epsilon< x_1,x_2<\epsilon\ . 
\end{equation}
We call \emph{bilinear regularization} the following one parameter family of
vector fields
\begin{equation}\label{bilinear_reg}\begin{split}
f_{\mathcal B}^{\epsilon}(x)=(1-\alpha(x_1))\left[
(1-\beta(x_2))f_1(x)+\beta(x_2)f_2(x)\right]+ \\
\alpha(x_1)(\left[(1-\beta(x_2))f_3(x)+\beta(x_2)f_4(x)\right]\ .
\end{split}
\end{equation} 
To be specific, in what follows, we have taken
\begin{equation}\label{alp_bet_eq}
\alpha(x_1)=\left \{ \begin{matrix} 1 & x_1>\epsilon \\ \frac 12 +\frac{x_1}{4 \epsilon} 
(3-(\frac{x_1}\epsilon)^2) & x_1 \in [-\epsilon, \epsilon] \\ 
0 & x_1<-\epsilon \end{matrix} \right . , \,\
\beta(x_2)=\left \{ \begin{matrix} 1 & x_2>\epsilon \\ \frac 12 +\frac{x_2}{4 \epsilon} 
(3-(\frac{x_2}\epsilon)^2) & x_2 \in [-\epsilon, \epsilon], \\ 
0 & x_2<-\epsilon \end{matrix} \right .
\end{equation}
but other choices of a $C^1$ monotone interpolant could be considered. 
Clearly, a choice of two different 
parameters for $\alpha$ and $\beta$, respectively $\epsilon_{\alpha}$ and $\epsilon_{\beta}$, 
could also be considered (e.g., see \cite{Seid1}),
and this would be justified if, for example, it is known a priori that the trajectories 
of un underlying physical system approach the two surfaces $\Sigma_1$ and $\Sigma_2$ at different rates.
Naturally, the choice of different parameters might lead to different qualitative behavior
of the corresponding solutions, as we will see in Section \ref{Numerics_section}. 

\begin{rem}\label{C0-int}
Often (e.g., see \cite{AlexSeid1, DiGu, Llibre.Silva.Teixeira}), a simpler $C^0$ regularization
is adopted, namely $\alpha(x_1)=\frac{x_1+\epsilon}{2\epsilon}$, and similarly
for $\beta(x_2)$, possibly with different $\epsilon_\alpha$ and $\epsilon_\beta$.
Proposition \ref{Qinvariant_prop} is not meaningful in this case, 
whereas Proposition \ref{as_stable_prop}
holds with essentially the same proof.  See also Remark \ref{RemDummy}.
\end{rem}

Our goal is to study the behavior of solutions of 
$\dot x=f^\epsilon_{\mathcal B}(x)$ for $x\in S$, and to see when, for $\epsilon \to 0$, 
they converge to the sliding solution on $\Sigma$ with vector field $f_B$ as given in \eqref{bilinear}. 
In order to do so, we  
split $\dot x=f^\epsilon_{\mathcal B}(x)$ into fast and slow motions.  
For 
$(x_1,x_2)$ in $S$,  we can rewrite \eqref{bilinear_reg} with respect to the variables 
$(\alpha, \beta, x_3, \ldots, x_n)$.  For the sake of more compact notation,
we let $y=(x_3, \ldots, x_n)$ and further $\dot y=g(\alpha,\beta,y)$.
In these variables, the full system rewrites as
\begin{equation}\label{full_system}\begin{split}
\dot \alpha & = \frac{d \alpha} {d x_1}
e_1^Tf^\epsilon_{\mathcal B}(\alpha, \beta, y):= \frac{d \alpha} {d x_1} g_1(\alpha, \beta, y) \\
\dot \beta & = \frac{d \beta} {d x_2}
e_2^Tf^\epsilon_{\mathcal B}(\alpha, \beta, y):= \frac{d \beta} {d x_2} g_2(\alpha, \beta, y), \\
\dot y & = g(\alpha, \beta, y) \qquad \quad 
\end{split}
\end{equation}
where $e_i$ is the standard $i$-th unit vector in $\R^n$, $i=1,2$.
Notice that $\frac{d \alpha}{d x_1}$ and 
$\frac{d \beta}{d x_2}$ depend on 
$\epsilon$ and are strictly positive (inside $S$); from \eqref{alp_bet_eq}, they are equal to 
$\frac{3}{4 \epsilon}(1-\frac{x_i^2}{\epsilon^2})$, with $i=1,2$.
We refer to $\alpha$ and $\beta$ as the fast variables and to $y$ as the slow variable. 
We denote the solution 
of \eqref{full_system} as $(\alpha_{\epsilon}(\cdot),\beta_{\epsilon}(\cdot),y_{\epsilon}(\cdot))$. 

Now, using \eqref{alp_bet_eq}, in \eqref{full_system}, because of monotonicity of $\alpha$ and $\beta$,
we can rewrite $x_1$ as a function of $\alpha$ and $x_2$ as a function of $\beta$. 
From \eqref{alp_bet_eq}, let $z=\frac{x_1}{\epsilon}$ and rewrite the first of \eqref{alp_bet_eq},
for $x_1\in [-\epsilon,\epsilon]$, as: $z^3-3z+4\alpha-2=0$. Using Vieta's substitution 
$z=w+\frac 1w$, we get the equation $w^6+(4\alpha-2)w^3+1=0$.  Then 
$w^3=(1-2\alpha) \pm 2 i \sqrt{\alpha-\alpha^2}$ and its third roots have modulus one, 
so that $z=w+\frac 1w=\bar w+\frac 1{\bar w}$ is real.
Let $\varphi(\alpha)=Arg \left ((1-2\alpha)+2i\sqrt{\alpha-\alpha^2} \right )$, then 
for $\alpha$ in [0,1], $\varphi(\alpha)$ is in $[0,\pi]$, and for 
$w(\alpha)=e^{i (\frac{\varphi(\alpha)}3+\frac 43 \pi})$, 
the corresponding $z(\alpha)$ can be rewritten as  
$z(\alpha)=2\cos(\frac{\varphi(\alpha)}3 +\frac 43 \pi)$ 
and satisfies $z(0)=-1$ and $z(1)=1$, this $z(\alpha)$ is the function we are looking for. 
Same reasoning applies for $\beta$. 
Then system \eqref{full_system} rewrites as
\begin{equation}\label{full_system_2}\begin{split}
\epsilon \dot \alpha & =\frac {3}{4} 
\left (1-4\cos^2 \left ( \frac{\varphi (\alpha)}{3}+\frac 43 \pi \right ) \right ) 
e_1^Tf_{\mathcal B}(\alpha, \beta, y) \\
 \epsilon \dot \beta & =\frac{3}{4} 
 \left (1-4\cos^2 \left (\frac{\varphi (\beta)}3 +\frac 43 \pi \right ) \right )
e_2^Tf_{\mathcal B}(\alpha, \beta, y), \\
\dot y & =g(\alpha,\beta,y) 
\end{split}
\end{equation}
with $\varphi(\gamma)=Arg \left ((1-2\gamma)+2i\sqrt{\gamma -\gamma^2} \right )$.
Notice that the function 
$\left (1-4 \cos^2 \left ( \frac{\varphi (\gamma)}{3}+\frac 43 \pi \right ) \right )$ 
is strictly positive for $\gamma \in (0,1)$ and it is $0$ at $\gamma=0,1$. 
Following standard approaches for singularly perturbed systems, we set 
$\epsilon=0$ in \eqref{full_system_2} and 
obtain the following system
\begin{equation}\label{slow_system}\begin{split}
0 & = g_1(\alpha,\beta,y) \\
0 & =g_2(\alpha,\beta,y) \\
\dot y & =g(\alpha,\beta, y)
\end{split}
\end{equation}
Notice that solutions of \eqref{slow_system} are sliding solutions on $\Sigma$ 
with bilinear vector field $f_B$ as in \eqref{bilinear}. 
Let $(\alpha^*(y),\beta^*(y))$ denote the solution of  
\begin{equation}\label{algebraic_eq}
\begin{cases}g_1(\alpha^*(y), \beta^*(y), y)& =0 \\ g_2(\alpha^*(y), 
\beta^*(y), y))&=0\end{cases}.
\end{equation}
(Recall that, under the assumptions 
of attractivity by sliding or by spiralling, 
$(\alpha^*(y),\beta^*(y))$ is the unique solution of \eqref{algebraic_eq} in $[0,1]^2$). 

Our goal in this section is twofold: to see if solutions of \eqref{full_system} converge 
to solutions of \eqref{slow_system} as $\epsilon \to 0$ and to explore the behavior of 
solutions of \eqref{full_system} in the neighborhood of generic exit points. 

\subsection{Asymptotic behavior of the regularized problem for $\Sigma$ attractive in finite time} 
From \eqref{full_system}, we
introduce the time variable $\tau=\frac {3t}{4\epsilon}$ and consider the fast system
\begin{equation}\label{fast_system}\begin{split}
\alpha'=\left (1-4\cos^2 \left ( \frac{\varphi (\alpha)}{3}+\frac 43 \pi \right ) \right )
g_1(\alpha,\beta,y) \\
\beta'=\left (1-4\cos^2 \left ( \frac{\varphi (\beta)}{3}+\frac 43 \pi \right ) \right )
g_2(\alpha,\beta,y), 
\end{split}
\end{equation}
where the ``prime'' denotes differentiation with respect to $\tau$, and $y$ is considered as
a vector of parameters. Notice that the solution $(\alpha^*(y),\beta^*(y))$ of \eqref{algebraic_eq} is 
an equilibrium of \eqref{fast_system}.  
We denote with $(\alpha(t,y),\beta(t,y))$ the solution of \eqref{fast_system} with 
initial condition $(\alpha_0,\beta_0)$.

\begin{rem}
In case we take different $\epsilon_\alpha$ and $\epsilon_\beta$ in \eqref{alp_bet_eq},
we can write $\epsilon_\beta=\eta \epsilon_\alpha$, perform the change of
time variable $\tau=\frac {3t}{4\epsilon_\alpha}$, and obtain (similarly to 
\eqref{fast_system}) the fast system
\begin{equation}\label{fast_system2}\begin{split}
\alpha'=\left (1-4\cos^2 \left ( \frac{\varphi (\alpha)}{3}+\frac 43 \pi \right ) \right )
g_1(\alpha,\beta,y) \\
\beta'=\frac{1}{\eta}
\left (1-4\cos^2 \left ( \frac{\varphi (\beta)}{3}+\frac 43 \pi \right ) \right )
g_2(\alpha,\beta,y)\ . 
\end{split}
\end{equation}
\end{rem}

The following result by Artstein is a stronger version of a classical result in singular 
perturbation theory (\cite[Theorem 2.1]{Artstein}).

\begin{theorem}\label{ClassicSingPert}
Assume that 
\begin{itemize}
\item[(i)] the solution $(\alpha^*(y),\beta^*(y))$ of \eqref{algebraic_eq}
is continuous in $y$; 
\item[(ii)] $(\alpha^*(y),\beta^*(y))$ is a locally asymptotically stable equilibrium 
of the fast system \eqref{fast_system}; 
\item[(iii)] the initial condition $(\alpha_0,\beta_0,y_0)$ of \eqref{full_system} 
is such that the $\omega$-limit set of $(\alpha(t,y_0),\beta(t,y_0))$, is $(\alpha^*(y_0),\beta^*(y_0))$;
\item[(iv)] 
the problem $\dot y=g(\alpha^*(y),\beta^*(y),y)$, $y(0)=y_0$, has a unique solution, 
denote it as $y_0(t)$.
\end{itemize}
Then, the solution of \eqref{full_system} with initial condition $(\alpha_0,\beta_0,y_0)$, is such that 
as $\epsilon \to 0$:
\begin{itemize}
\item [a)]$y(\cdot)$ converges to $y_0(\cdot)$, uniformly in time on intervals of the form $[0,T]$;
\item [b)] $(\alpha(t),\beta(t))$ converges to 
$(\alpha^*(y_0(t)),\beta^*(y_0(t)))$ uniformly in time on intervals of the form $[\delta, T]$, $\delta>0$. 
\end{itemize}
\qed
\end{theorem}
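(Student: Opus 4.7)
The plan is to carry out a classical two-timescale analysis in the spirit of Tikhonov's theorem, splitting the interval $[0,T]$ into a boundary layer near $t=0$ and a quasi-steady phase on $[\delta,T]$, and then using assumptions (ii)-(iii) to control the fast variables and (i), (iv) to identify the limiting slow trajectory.

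\textbf{Boundary layer.} First I would rescale time via $\tau = 3t/(4\epsilon)$ so that \eqref{full_system_2} becomes the fast system \eqref{fast_system} coupled to $y' = \tfrac{4\epsilon}{3}\, g(\alpha,\beta,y)$. On $\tau$-intervals of length $O(1)$, the slow variable $y$ drifts by $O(\epsilon)$. A Gronwall comparison between the true $\epsilon$-dependent fast trajectory and the frozen trajectory $(\alpha(\tau,y_0),\beta(\tau,y_0))$ of \eqref{fast_system} shows they differ by $O(\epsilon\tau)$ as long as the latter remains in a fixed compact set. By assumption (iii), the frozen trajectory enters every neighborhood of $(\alpha^*(y_0),\beta^*(y_0))$ in finite $\tau$-time; picking $\tau_\epsilon$ with $\tau_\epsilon\to\infty$ but $\epsilon\tau_\epsilon\to 0$ (so that the $t$-time $t_\epsilon = 4\epsilon\tau_\epsilon/3 \to 0$) delivers a time $t_\epsilon\to 0$ at which $(\alpha_\epsilon(t_\epsilon),\beta_\epsilon(t_\epsilon))$ lies in any preassigned neighborhood of $(\alpha^*(y_\epsilon(t_\epsilon)),\beta^*(y_\epsilon(t_\epsilon)))$.

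\textbf{Quasi-steady tracking.} On $[t_\epsilon,T]$, I would combine local asymptotic stability (ii) with continuity (i) to build a tube around the curve $\{(\alpha^*(y),\beta^*(y)) : y\in y_0([0,T])\}$. By compactness of this curve and a covering argument, one obtains a uniform basin of attraction and uniform decay estimates, which can be encoded in a continuous Lyapunov function $V(\alpha,\beta;y)$ with $\dot V \le -cV$ along solutions of \eqref{fast_system} near $(\alpha^*(y),\beta^*(y))$. Perturbing by the $O(\epsilon)$ drift of $y$ in the original time, a standard estimate shows that $(\alpha_\epsilon(t),\beta_\epsilon(t))$ stays within a $o(1)$ neighborhood of $(\alpha^*(y_\epsilon(t)),\beta^*(y_\epsilon(t)))$ uniformly on $[\delta,T]$ for any fixed $\delta>0$, proving (b) once (a) is established.

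\textbf{Convergence of the slow variable.} With the tracking estimate in hand, $\dot y_\epsilon = g(\alpha_\epsilon,\beta_\epsilon,y_\epsilon)$ differs from $g(\alpha^*(y_\epsilon),\beta^*(y_\epsilon),y_\epsilon)$ by an integrable-in-time error that is $o(1)$ off the boundary layer and bounded on it (whose length vanishes). Uniform boundedness of $g$ on the relevant compact set yields equicontinuity of $\{y_\epsilon\}$, so Ascoli--Arzel\`a extracts a uniform limit $y^*$. Passing to the limit in the integral form of the ODE and using continuity of $g$, $\alpha^*$, $\beta^*$, the limit satisfies $\dot y^* = g(\alpha^*(y^*),\beta^*(y^*),y^*)$ with $y^*(0)=y_0$. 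Uniqueness (iv) forces $y^*=y_0$, and since every subsequence of $\{y_\epsilon\}$ has a further subsequence converging to the same $y_0$, the whole family converges, giving (a).

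\textbf{Main obstacle.} The delicate step is the uniform tracking in the quasi-steady phase: assumption (ii) provides only pointwise-in-$y$ local asymptotic stability, and one must patch the local basins along the compact curve $y_0([0,T])$ into a tube with uniform decay. Continuity of $(\alpha^*,\beta^*)$ and compactness reduce this to a covering argument, but cleanly handling potentially non-uniform decay rates (and the $O(\epsilon)$ drift of the moving equilibrium) requires either a converse Lyapunov construction or Artstein's occupational-measure viewpoint; this is the technical core that distinguishes the stated result from textbook Tikhonov.
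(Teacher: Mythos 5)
First, a point of orientation: the paper does not prove Theorem \ref{ClassicSingPert} at all --- it is imported from Artstein (\cite[Theorem 2.1]{Artstein}) and stated without proof --- so there is no in-paper argument to compare yours against. Judged on its own terms, your sketch is the classical Tikhonov two-phase argument. Its boundary-layer and Ascoli--Arzel\`a steps are sound in outline, although the Gronwall comparison between the true fast trajectory and the frozen one on a $\tau$-interval gives an error of order $\epsilon\,\tau e^{L\tau}$, not $O(\epsilon\tau)$; this still permits a choice such as $\tau_\epsilon\sim\frac{1}{2L}\log(1/\epsilon)$, so that step survives.

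The genuine gap is the one you flag yourself and then do not close: the quasi-steady tracking step. Hypothesis (ii) gives only \emph{local asymptotic} stability of $(\alpha^*(y),\beta^*(y))$ for each fixed $y$, and continuity of $(\alpha^*,\beta^*)$ together with compactness of $y_0([0,T])$ does \emph{not} by itself upgrade this to a uniform tube with a Lyapunov function satisfying $\dot V\le -cV$: that inequality encodes exponential decay, which is strictly stronger than what (ii) provides, and even a converse-Lyapunov function for mere asymptotic stability only yields $\dot V\le -W$ with $W$ positive definite but with no uniform positive lower bound as $y$ ranges over the compact curve --- the decay rate and the admissible tube radius can degenerate continuously (for instance when eigenvalues of the linearization of \eqref{fast_system} approach the imaginary axis while stability is preserved by higher-order terms). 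This is precisely the regime in which textbook Tikhonov fails and for which Artstein's theorem is the ``stronger version'': his proof abandons the tube/Gronwall mechanism and instead shows that limits of the fast occupational measures are Young measures whose values are invariant measures of \eqref{fast_system}, and that under (ii)--(iii) the only admissible invariant measure is the Dirac mass at $(\alpha^*(y),\beta^*(y))$. Without importing that machinery, or without strengthening (ii) to uniform (e.g.\ exponential) stability along $y_0([0,T])$, the covering argument you invoke does not produce the uniform decay on which both (a) and (b) rest; as written, the proposal asserts the key estimate rather than proving it.
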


Therefore, when conditions (i)-(iv) of Theorem \ref{ClassicSingPert}
are verified, the solution of the regular system converges 
to the sliding solution with bilinear vector field \eqref{bilinear}.  

\begin{rem}\label{RemDummy}
If we used (see Remark \ref{C0-int})
the $C^0$ regularization $\alpha(x_1)=\frac{x_1+\epsilon}{2\epsilon}$, 
$\beta(x_2)=\frac{x_2+\epsilon}{2\epsilon}$, and let $\tau=\frac{t}{2\epsilon}$, then
we would have obtained the system
\begin{equation}\label{dummy}\begin{split}
\alpha'=g_1(\alpha,\beta,y) \\
\beta'= g_2(\alpha,\beta,y)
\end{split}
\end{equation}
instead of \eqref{fast_system}.  Now, \eqref{dummy} is precisely the ``dummy'' system of 
\cite{Jeffrey1} and \cite{GuHairer}.  However, 
\eqref{fast_system} is not orbitally equivalent to \eqref{dummy}. As we will see 
in Proposition \ref{as_stable_prop}, under appropriate conditions of attractivity of $\Sigma$, 
$(\alpha^*(y),\beta^*(y))$ is an asymptotically stable equilibrium for both
\eqref{fast_system} and \eqref{dummy}. However, condition $(iii)$ in Theorem \ref{ClassicSingPert}, 
implies that the limiting behavior of the solution of \eqref{full_system}, depends also on 
the basin of attraction of $(\alpha^*(y),\beta^*(y))$ and this, in general, 
is not the same for the two systems.
As an illustration of this, see Example \ref{example_2} in Section \ref{Numerics_section}.  
System \eqref{fast_system} is the system we need 
to study in order to understand the limiting behavior of the regularized solution in the case
of the $C^1$ regularization \eqref{alp_bet_eq}.  Moreover, we note that if we had used the
$C^0$ regularization with different parameters $\epsilon_\alpha$ and $\epsilon_\beta$, namely
$\alpha(x_1)=\frac{x_1+\epsilon_\alpha}{2\epsilon_\alpha}$ and
$\beta(x_2)=\frac{x_2+\epsilon_\beta}{2\epsilon_\beta}$, then we would obtain a system not
orbitally equivalent to \eqref{dummy}.
\end{rem}

In what follows, we first assume that $\Sigma$ is attractive in finite time 
(upon sliding or spirally) and we want to verify if and when (i), (ii) and (iii) are satisfied. 
This in turn will imply that solutions of the regularized problem converge to the sliding solution 
on $\Sigma$ with vector field $f_B$ as given in  \eqref{bilinear}.   
The following hold, both when $\Sigma$ is attractive upon sliding or spirally.
\begin{itemize}
\item[(a)]  System \eqref{fast_system} has a unique equilibrium 
$(\alpha^*(y),\beta^*(y))$ in $(0,1)^2$. 
\item[(b)] The functions $\alpha^*(y),\beta^*(y)$ are smooth functions of $y$. 
(For $\Sigma$ attractive upon sliding, this is in \cite[Theorem 8]{DiElLo}, and
the case of $\Sigma$ spirally attractive is analogous.)
Notice that this point (b) implies (iv) of Theorem \ref{ClassicSingPert}. 
\end{itemize}

\begin{rem}\label{unique_eq_remark}
We emphasize that the assumption that $\Sigma$ is attractive in finite time is 
sufficient but not necessary for the uniqueness of $(\alpha^*(y),\beta^*(y))$; e.g.,
see \cite{DiElLo}.
As we will see in Section \ref{Numerics_section}, this in turn will impact the behavior 
of the regularized solution, that might remain close to $\Sigma$ even when 
$\Sigma$ is not attractive.  
\end{rem}

In Proposition \ref{Qinvariant_prop} and Proposition \ref{as_stable_prop}, 
we study the dynamics of \eqref{fast_system} to verify asymptotic stability of $(\alpha^*,\beta^*)$. 

\begin{proposition}\label{Qinvariant_prop}
The phase space of \eqref{fast_system} is the square $Q=[0,1]^2$. 
Moreover, the boundary of $Q$, denoted as  $\partial Q$, is an invariant set for all 
values of $y$. The vertices of $Q$ are equilibria. If any of the sliding vector fields 
$f_{\Sigma_{1,2}}^\pm$ is well defined (i.e., if there is sliding on any of $\Sigma_{1,2}^\pm$), 
the corresponding value of $(\alpha(\bar y),\beta(\bar y))$ in \eqref{bilinear} is an 
equilibrium of \eqref{fast_system} for $y=\bar y$.   
\end{proposition}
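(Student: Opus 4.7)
The plan is to separate the four assertions and exploit the explicit form of the prefactors $P(\gamma):=1-4\cos^2(\varphi(\gamma)/3+4\pi/3)$ multiplying $g_1,g_2$ in \eqref{fast_system}.

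First, the phase space claim follows immediately from \eqref{conditions}: since $\alpha,\beta:\R\to[0,1]$ are monotone and take values in $[0,1]$, any trajectory of \eqref{fast_system} lives in $Q=[0,1]^2$. To get invariance of $\partial Q$, the key observation is that $P(\gamma)$ vanishes at $\gamma=0$ and $\gamma=1$. One can check this directly: at $\gamma=0$, $\varphi(0)=\mathrm{Arg}(1)=0$, so $\cos(\varphi(0)/3+4\pi/3)=\cos(4\pi/3)=-1/2$, giving $P(0)=1-4\cdot\tfrac14=0$; similarly $\varphi(1)=\pi$ yields $P(1)=0$. Alternatively, and more conceptually, $P$ is (up to constants) the rescaled version of $\alpha'(x_1)=\frac{3}{4\epsilon}(1-(x_1/\epsilon)^2)$, which vanishes at $x_1=\pm\epsilon$, i.e.\ at $\alpha=0,1$. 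Hence on $\{\alpha=0\}\cup\{\alpha=1\}$ we have $\alpha'=0$ and on $\{\beta=0\}\cup\{\beta=1\}$ we have $\beta'=0$, so the four edges of $Q$ are invariant, and the four vertices are equilibria (both prefactors vanish).

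For the fourth assertion, I would treat each edge separately and match the surviving one-dimensional dynamics with the corresponding sliding condition from \eqref{bilinear}. Consider the edge $\beta=1$. There $\beta'\equiv 0$, and the bilinear field \eqref{bilinear_reg} reduces to $(1-\alpha)f_2+\alpha f_4$, so $g_1(\alpha,1,y)=e_1^\top\bigl((1-\alpha)f_2+\alpha f_4\bigr)=(1-\alpha)w^1_2+\alpha w^1_4$ (using $\nabla h_1=e_1$ in our normalized coordinates). Whenever sliding on $\Sigma_1^+$ is well defined, the Filippov parameter $\bar\alpha=w^1_2/(w^1_2-w^1_4)$ is precisely the root of this expression, so $(\bar\alpha,1)$ is an equilibrium of \eqref{fast_system}; observing that for $\beta=1$ the weights $(1-\alpha)(1-\beta),(1-\alpha)\beta,\alpha(1-\beta),\alpha\beta$ in \eqref{bilinear} collapse to $(0,1-\alpha,0,\alpha)$, one sees that $\bar\alpha$ coincides with the $\alpha$-component produced by \eqref{bilinear} on $\Sigma_1^+$. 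The three remaining edges are handled symmetrically: $\beta=0$ matches $\Sigma_1^-$ (involving $f_1,f_3$), $\alpha=1$ matches $\Sigma_2^+$ (involving $f_3,f_4$), and $\alpha=0$ matches $\Sigma_2^-$ (involving $f_1,f_2$).

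The only step that deserves care, and which I expect to be the main obstacle, is the bookkeeping identifying each edge of $Q$ with the right $\Sigma_{1,2}^\pm$, together with the verification that the one-dimensional reduction of $g_i$ on that edge agrees verbatim with the Filippov sliding equation on that part of $\Sigma$. The vanishing of $P$ at the endpoints and the consequent invariance of $\partial Q$ are computational but immediate; the remaining novelty is really this correspondence between the equilibria on the boundary of the fast-system square and the admissible Filippov bilinear selections on the component manifolds $\Sigma_{1,2}^\pm$.
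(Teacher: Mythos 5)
Your proof is correct and follows essentially the same route as the paper's: the prefactors vanish at $\alpha,\beta\in\{0,1\}$ (equivalently at $x_{1,2}=\pm\epsilon$), which gives invariance of $\partial Q$ and the vertex equilibria, and on each edge the surviving equation $g_i=0$ reproduces the Filippov sliding condition on the corresponding $\Sigma_{1,2}^\pm$, with the same edge-to-manifold correspondence as in the paper's bullet list. (Indeed, you correctly write $g_1(\bar\alpha,1,\bar y)=0$ for the $\Sigma_1^+$ edge, where the paper's corresponding bullet has a slip writing $g_2$.)
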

\begin{proof}
For $\alpha=0,1$, or $\beta=0,1$, it must be  $x_1=-\epsilon,\epsilon$ 
or $x_2=-\epsilon, \epsilon$, and hence $\alpha'=0$ or
$\beta'=0$.  
Then, for all $y$,  the boundary of $Q$ is invariant under the flow of \eqref{fast_system}. 
It also follows that the vertices of $Q$ are equilibria of \eqref{fast_system}. 
Notice that the corresponding values of $g_1$ and $g_2$ at the equilibria are 
$(w^1_1,w^2_1)$ for $(0,0)$, $(w^1_3, w^2_3)$ for $(1,0)$, $(w^1_2,w^2_2)$ for $(0,1)$, 
and $(w^1_4,w^2_4)$ for $(1,1)$, where the $w^i_j$'s are defined in \eqref{bilinear}. 

\noindent In addition to the vertices of $Q$, other equilibria of 
\eqref{fast_system} might belong to $\partial Q$, as follows.
\begin{itemize}
\item If, for $y=\bar y$, there is sliding on ${\Sigma_2^-}$, then there exists $\beta^-(\bar y)$ such that 
$g_2(0,\beta^-(\bar y),\bar y)=0$ and  $(0,\beta^-(\bar y))$ is an equilibrium of \eqref{fast_system}. 
The vector field \eqref{bilinear} for $(\alpha,\beta)=(0,\beta^-(\bar y))$ is $f_{\Sigma_2^-}(\bar y)$.
\item If, for $y=\bar y$, there is sliding on ${\Sigma_2^+}$ then  there exists $\beta^+(\bar y)$ such that 
$g_2(1,\beta^+(\bar y),\bar y)=0$ and $(1,\beta^+(\bar y))$ is an equilibrium of \eqref{fast_system}.
The vector field \eqref{bilinear} for $(\alpha,\beta)=(0,\beta^+(\bar y))$ is $f_{\Sigma_2^+}(\bar y)$.
\item If, for $y=\bar y$, there is sliding on ${\Sigma_1^-}$ for $y=\bar y$, then  
there exists $\alpha^-(\bar y)$ 
such that $g_1(\alpha^-(\bar y),0,\bar y)=0$ and $(\alpha^-(\bar y),0)$ is an equilibrium 
of \eqref{fast_system}.
The vector field \eqref{bilinear} for $(\alpha,\beta)=(\alpha^-(\bar y),0)$ is $f_{\Sigma_1^-}(\bar y)$.
\item If, for $y=\bar y$, there is sliding on ${\Sigma_1^+}$ for $y=\bar y$, then 
there exists $\alpha^+(\bar y)$ 
such that $g_2(\alpha^+(\bar y),1,\bar y)=0$ and $(\alpha^+(\bar y),1)$ is an equilibrium 
of \eqref{fast_system}.
The vector field \eqref{bilinear} for $(\alpha,\beta)=(\alpha^+(\bar y),1)$ is $f_{\Sigma_1^+}(\bar y)$.
\end{itemize} 
\end{proof}

In Proposition \ref{as_stable_prop} we give sufficient conditions for  
$(\alpha^*,\beta^*)$ to be locally asymptotically stable.

\begin{proposition}\label{as_stable_prop}
Assume that $\Sigma$ is attractive in finite time upon sliding along 
at least two of the $\Sigma_{1,2}^{\pm}$, 
and that there is attractive sliding along these codimension 1 surfaces. 
Then $(\alpha^*(y),\beta^*(y))$ is exponentially asymptotically stable for \eqref{fast_system}.  
\end{proposition}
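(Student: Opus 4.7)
The plan is to establish exponential asymptotic stability of $(\alpha^*(y),\beta^*(y))$ by linearization of the fast system \eqref{fast_system}, showing that both eigenvalues of the Jacobian at the equilibrium lie in the open left half--plane, whence Lyapunov's indirect method gives the conclusion.

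First, I would compute the Jacobian $J_*$ of the right--hand side of \eqref{fast_system} at $(\alpha^*,\beta^*)$. Writing $k(\gamma):=1-4\cos^2(\varphi(\gamma)/3+4\pi/3)$ and using the equilibrium identities $g_1(\alpha^*,\beta^*,y)=g_2(\alpha^*,\beta^*,y)=0$, the terms arising from $k'(\alpha^*)$ and $k'(\beta^*)$ drop out, and the Jacobian factors as
$$J_*=\begin{pmatrix} k(\alpha^*) & 0 \\ 0 & k(\beta^*) \end{pmatrix}\begin{pmatrix} \partial_\alpha g_1 & \partial_\beta g_1 \\ \partial_\alpha g_2 & \partial_\beta g_2 \end{pmatrix}_{(\alpha^*,\beta^*)} =: D\,M.$$
Since $(\alpha^*,\beta^*)\in(0,1)^2$ strictly, $D$ is positive diagonal, so $\det J_*=k(\alpha^*)k(\beta^*)\det M$. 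Therefore, exponential stability reduces to proving $\det M>0$ and $\operatorname{tr}(DM)<0$; this incidentally is the reason (see Remark~\ref{RemDummy}) that \eqref{fast_system} and \eqref{dummy} share the same stability conclusion at the interior equilibrium.

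Second, I would exploit the explicit bilinear representation
$$g_j(\alpha,\beta,y)=(1-\alpha)(1-\beta)w_1^j+(1-\alpha)\beta\,w_2^j+\alpha(1-\beta)w_3^j+\alpha\beta\,w_4^j,\quad j=1,2,$$
to write the four entries of $M$ as explicit convex combinations of differences of the $w_k^j$. The hypothesis that at least two of $\Sigma_{1,2}^\pm$ have attractive sliding fixes, via \eqref{Wij} and the sign conventions of \eqref{RegionsCod2}, the signs of at least four of the eight $w_k^j$'s (for example, attractive sliding on $\Sigma_2^-$ enforces $w_1^2>0$ and $w_2^2<0$). These, together with the scalar identities $g_1=g_2=0$ at $(\alpha^*,\beta^*)$, determine the sign of $\operatorname{tr}(DM)$. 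For the determinant, I would argue geometrically: $\det M$ is the Jacobian determinant of the map $(\alpha,\beta)\mapsto(g_1,g_2)$ at a point where both components vanish, so its sign encodes the orientation of the transversal crossing of the level curves $\{g_1=0\}$ and $\{g_2=0\}$ inside $Q$. By Proposition~\ref{Qinvariant_prop} and the uniqueness of $(\alpha^*,\beta^*)$ in $(0,1)^2$, these curves cross exactly once inside $Q$, and the signs of $g_j$ at the attractive edges/corners (forced by the attractivity assumption) fix the orientation to yield $\det M>0$.

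The main obstacle is a clean uniform case analysis, because ``at least two attractive'' admits several sub-configurations (two adjacent, two opposite, three or four attractive sides) and in the worst sub-case only four of the eight $w_k^j$ are sign-determined by local sliding attractivity alone. To close the argument in that sub-case, I would invoke the assumption of finite--time attractivity of $\Sigma$: it implies that the sliding vector fields on the two attractive $\Sigma_{1,2}^\pm$ point towards $\Sigma$, and translating this inequality into a sign relation among the remaining $w_k^j$ provides the missing information to push through the sign computations of $\operatorname{tr}(DM)$ and $\det M$. Once these hold uniformly across sub-cases, hyperbolicity of $J_*$ yields exponential stability on a neighborhood of $(\alpha^*(y),\beta^*(y))$, with rate uniform in $y$ on compact sets by continuity of $k,\,g_1,\,g_2$ and their derivatives in $y$.
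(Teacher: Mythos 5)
Your overall route coincides with the paper's: linearize \eqref{fast_system} at $(\alpha^*,\beta^*)$, use the equilibrium identities $g_1=g_2=0$ to kill the $k'$ terms so that the Jacobian factors as $J_*=DM$ with $D$ positive diagonal and $M$ the Jacobian of $(g_1,g_2)$ in $(\alpha,\beta)$, and conclude from $\det>0$ together with a negative trace. Your trace analysis is essentially what the paper does: it fixes one sign configuration (Table \ref{CaseS1pS2p1_table}), writes $\partial_\alpha g_1$ and $\partial_\beta g_2$ explicitly as combinations of the $w^i_j$, and uses $C_1(\beta^*)>0$, $L_2(\alpha^*)<0$ with $\alpha^*,\beta^*\in[0,1]$ to get \emph{both diagonal entries} of $M$ negative. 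That stronger statement is what actually makes the conclusion independent of the choice of interpolants: $\operatorname{tr}(DM)<0$ for one positive diagonal $D$ does not transfer to another $D$ unless both diagonal entries of $M$ are negative, so your aside about why \eqref{fast_system} and \eqref{dummy} share the same stability verdict needs that refinement.

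The genuine gap is the determinant. You claim that uniqueness of the transversal crossing of $\{g_1=0\}$ and $\{g_2=0\}$ in $(0,1)^2$, plus the signs of $g_j$ at the attractive edges and corners, ``fix the orientation'' so that $\det M>0$. Uniqueness of a nondegenerate zero does not determine the sign of the Jacobian determinant there: it only identifies the local index with the Brouwer degree of the map $(g_1,g_2)$ on $\partial Q$, and that degree must actually be computed --- by tracking the signs of $(g_1,g_2)$ around the \emph{whole} boundary of $Q$, including the non-attractive sides where the hypotheses pin down fewer of the $w^i_j$ --- or else $\det M>0$ must be obtained by direct computation. As written, your argument is equally consistent with $\det M<0$, i.e., with $(\alpha^*,\beta^*)$ being a saddle. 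The paper closes exactly this step by importing the explicit computation of $\det \tilde J>0$ from the proof of \cite[Theorem 8]{DiElLo}; you need either to reproduce that computation or to carry out the degree count honestly across all the sub-configurations you enumerate.
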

\begin{proof}
We prove the result for a particular configuration of vector fields in a neighborhood of $\Sigma$. 
The proof for all the other configurations is analogous. 
We consider the case in Figure \ref{CaseS1pS2p1_fig}, where $\Sigma$ is attractive upon sliding
along $\Sigma_{1,2}^+$, as characterized by the signs of the $w^i_j$'s 
in Table \ref{CaseS1pS2p1_table}, where the $w^i_j$'s are defined in \eqref{Wij}. 
\begin{table}\caption{}\label{CaseS1pS2p1_table}
\begin{tabular}{|c|c|c|c|c|}
\hline
 & $i=1$ & $i=2$ & $i=3$ & $i=4$ \\
 \hline
$w^1_i$ & $+$ & $+$ & $+$ & $-$ \\
\hline
$w^2_i$ & $-$ & $-$ & $+$ & $-$ \\ 
\hline
\end{tabular}
\end{table}   

\begin{figure}\caption{Proposition \ref{as_stable_prop}. 
Reference configuration under which $\Sigma$ is attractive in finite time.}
\label{CaseS1pS2p1_fig}
\includegraphics[width=100pt]{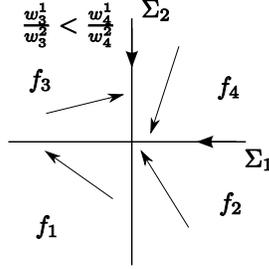}
\end{figure}

$\Sigma$ attractive implies that \eqref{full_system} has a unique equilibrium 
$(\alpha^*,\beta^*) \in (0,1)^2$.  To study the stability of $(\alpha^*,\beta^*)$ 
we consider the Jacobian matrix of \eqref{fast_system}, evaluated at $(\alpha^*,\beta^*)$,
that is we look at the eigenvalues of
\begin{equation}\label{jacobian_mat}\begin{split}
J&=\begin{pmatrix} \left ( 1-4\cos^2(\frac{\varphi(\alpha)}3+\frac 43 \pi) \right )
\frac{\partial g_1}{\partial \alpha}(\alpha^*,\beta^*) & 
\left ( 1-4\cos^2(\frac{\varphi(\alpha)}3+\frac 43 \pi) \right ) 
\frac{\partial g_1}{\partial \beta}(\alpha^*,\beta^*) \\
\left ( 1-4\cos^2(\frac{\varphi(\beta)}3+\frac 43 \pi) \right ) 
\frac{\partial g_2}{\partial \alpha} (\alpha^*,\beta^*) & 
\left ( 1-4\cos^2(\frac{\varphi(\beta)}3+\frac 43 \pi) \right )
\frac{\partial g_2}{\partial \beta}(\alpha^*,\beta^*) 
\end{pmatrix}=\\
&\begin{pmatrix} 1-4\cos^2(\frac{\varphi(\alpha)}3+\frac 43 \pi) & 
0  \\ 0  & 
1-4\cos^2(\frac{\varphi(\beta)}3+\frac 43 \pi) 
\end{pmatrix}\, \tilde J\ , \,\
\tilde J:=\begin{pmatrix} 
\frac{\partial g_1}{\partial \alpha}(\alpha^*,\beta^*) &  
\frac{\partial g_1}{\partial \beta}(\alpha^*,\beta^*) \\
\frac{\partial g_2}{\partial \alpha} (\alpha^*,\beta^*) & 
\frac{\partial g_2}{\partial \beta}(\alpha^*,\beta^*) 
\end{pmatrix}\ .
\end{split}
\end{equation}
Then, as in the proof of \cite[Theorem 8]{DiElLo}, it follows that $\text{det}(\tilde J)>0$, and hence 
$\text{det}(J)>0$. 
We now show that $\text{trace}(J)$ is negative. We write explicitly the diagonal elements of $\tilde J$ 
\begin{equation*}
\begin{split}
\frac{\partial g_1}{\partial \alpha}(\alpha^*,\beta^*)=
-(1-\beta^*)w^1_1-\beta^* w^1_2+(1-\beta^*)w^1_3+\beta^* w^1_4 \\
\frac{\partial g_2}{\partial \beta}(\alpha^*,\beta^*)=
-(1-\alpha^*)w^2_1+(1-\alpha^*)w^2_2-\alpha^* w^2_3+\alpha^* w^2_4.
\end{split}
\end{equation*}
and the equilibrium 
$(\alpha^*,\beta^*)=\left (\frac{C_1(\beta^*)}{C_1(\beta^*)-C_2(\beta^*)},
\frac{L_1(\alpha^*)}{L_1(\alpha^*)-L_2(\alpha^*)} \right )$,
where $C_1(\beta^*)=(1-\beta^*)w^1_1+\beta^*w^1_2$, $C_2(\beta^*)=(1-\beta^*)w^1_3+\beta^*w^1_4$, 
$L_1(\alpha^*)=(1-\alpha^*)w^2_1+\alpha^*w^2_3$, $L_2(\alpha^*)=(1-\alpha^*)w^2_2+\alpha^*w^2_4$. 
Notice that $0 \leq \alpha^* \leq 1$ together with $C_1(\beta^*)>0$ (see Table \ref{CaseS1pS2p1_table}) 
implies that $\frac{\partial g_1}{\partial \alpha}(\alpha^*,\beta^*)<0$. 
Similarly, $0\le \beta^* \leq 1$ together with 
$L_2(\alpha^*)<0$ implies $\frac{\partial g_2}{\partial \beta}(\alpha^*,\beta^*)<0$. This together with 
$\left ( 1-4\cos^2(\frac{\varphi(\gamma)}3+\frac 43 \pi) \right )>0$, gives the sought result.
\end{proof}

The assumptions in Propositions \ref{as_stable_prop} 
exclude the case of $\Sigma$ 
spirally attractive and $\Sigma$ attractive upon sliding on just one of the $\Sigma_{1,2}^\pm$. 
For each of these cases, examples can be given where the equilibrium $(\alpha^*,\beta^*)$ 
of \eqref{fast_system} is unstable even if $\Sigma$ is attractive.
See Example \ref{example_3}
in Section \ref{Numerics_section}, where, with $\epsilon_{\beta}=10\epsilon_{\alpha}$, the 
equilibrium $(\alpha^*,\beta^*)$ of the fast system is unstable while $\Sigma$ is attractive.  

\subsection{Behavior of the regularized solution in the neighborhood of exit points}
\label{Exitpoints_section}
Here, we are interested in studying how 
solutions of \eqref{full_system} behave when $\Sigma$ looses attractivity.
Assume first that $\Sigma$ is attractive upon sliding. We will focus on the case when 
$\Sigma$ looses attractivity at so-called {\emph{potential tangential exit points}} and at
{\emph{potential non-tangential exit points}}, defined next. \\

\begin{defn}\label{PotExitPts}
Let $\bar x$ be a point on $\Sigma$. We say that $\bar x$ is a {\emph{potential tangential exit point}}
if, at $\bar x$, one of the vector fields $f_{\Sigma_{1,2}}^\pm$ is tangent to $\Sigma$ 
(and hence it belongs to the Filippov convex combination). 
We say that $\bar x$ is a {\emph{potential non-tangential}} exit point if, at $\bar x$, one of 
the vector fields $f_j(x)$ is tangent either to $\Sigma_1$ or to $\Sigma_2$ and it points 
away from $\Sigma$.  
\end{defn}

{\bf Simplification: $\Sigma$ is a curve}.
For simplicity, below we restrict to the case $n=3$, so that, in our case of
$\Sigma=\{x_1=x_2=0\}$, $\Sigma$ is a (portion of the) $x_3$-axis. 
(Of course, 
in general, when the co-dimension 2 discontinuity manifold $\Sigma$ is immersed in $\R^n$, with $n\ge 3$,
we expect that exit points themselves will lie on a (union of disjoint) $(n-3)$-dimensional
manifold(s) of codimension 3.  With the understanding that exit points are now lying on these
manifolds, our description below still qualitatively holds). Also, 
we consider only first order phenomena i.e., only potential tangential (respectively,
non-tangential) exit points $\bar x$, such that the derivative with respect to time
(evaluated along the trajectory) of $f_{\Sigma_{1,2}}^\pm(\bar x)$ 
(respectively, $f_j(\bar x)$, $j=1,\dots, 4$) 
is different from zero.  As a consequence, for us exit points are isolated points on $\Sigma$. 
In this case of $n=3$, and insofar as the behavior of the regularized solution in
the neighborhood of tangential and non-tangential exit points, the following phenomena can arise.

\subsubsection{Tangential exit points}  What occurs in this
case depends on the number of equilibria of \eqref{fast_system}.

Suppose that the regularized solution enters the neighborhood of a tangential exit point 
$\bar x=(0,0, \bar x_3)$, and without loss of generality let $f_{\Sigma_1^+}(\bar x)$ be the
vector field tangent to $\Sigma$, and let $\Sigma$ be attractive for $x_3< \bar x_3$.
Then either $(a)$ or $(b)$ below can happen.
\begin{itemize}
\item[(a)] $(\alpha^*(\bar x_3),\beta^*(\bar x_3))=(\alpha^+(\bar x_3),1)$ and  the regularized 
solution exits the neighborhood of 
$\Sigma$ to enter a neighborhood of $\Sigma_1^+$;
\item[(b)] $(\alpha^*(\bar x_3),\beta^*(\bar x_3)) \neq (\alpha^+(\bar x_3),1)$  
and these are the only solutions of 
\eqref{algebraic_eq} for $y=x_3=\bar x_3$; a first order analysis guarantees that the 
solution that at $\bar x_3$ is equal to $(\alpha^+(\bar x_3),1)$ is in the interior of 
$[0,1]^2$ for $x_3$ in a right neighborhood of $\bar x_3$.  
Nonetheless, $(\alpha^*(x_3(t)),\beta^*(x_3(t)))$ retains its asymptotic stability. 
It follows that the regularized 
solution remains close to $\Sigma$ and for $\epsilon \to 0$ it will converge to the solution of 
the bilinear vector field on $\Sigma$, still well defined even though $\Sigma$ is not 
locally attractive anymore. Eventually 
the regularized solution $x(t)$ will leave a neighborhood of 
$\Sigma$ if one of the two following phenomena takes place:
\begin{itemize}
\item[(i)] the equilibrium of \eqref{fast_system} reaches a fold bifurcation value; 
\item[(ii)] $(\alpha^*(x(t)),\beta^*(x(t)))$ is on the boundary of $Q$ for $t=\hat t$,
and this can happen only if one of the vector fields $f_{\Sigma_{1,2}}^\pm(x(\hat t))$ 
is tangent to $\Sigma$, say $f_{\Sigma_2}^+$. 
\end{itemize}
In case (ii), the regularized solution will leave the neighborhood of 
$\Sigma$ to enter a neighborhood of $\Sigma_2^+$. 
But, in case (i) it is not generally possible to predict how the regularized solution
will leave a neighborhood of $\Sigma$.
\end{itemize}
The phenomena (a) and (b) above do not depend on $\frac{d\alpha}{dx_1}$ and 
$\frac{d\beta}{dx_2}$ in \eqref{fast_system}, but only on the solution of the
algebraic part in \eqref{slow_system}. Hence any choice of functions
$\alpha$ and $\beta$ in \eqref{bilinear_reg} that satisfies \eqref{conditions}
will lead to the same phenomena. 

\subsubsection{Non tangential exit points}
Suppose that the regularized solution enters the neighborhood of a non-tangential 
exit point, $\bar x=(0,0,\bar x_3)$.  
In this case, although there is only one solution of \eqref{slow_system}, different
things can happen depending on the functions $\frac{d\alpha}{dx_1}$ and 
$\frac{d\beta}{dx_2}$ in \eqref{fast_system}.

Suppose that $f_1$ is the vector field verifying the 
conditions for non tangential exit.  
Then $(\alpha^*(\bar x_3),\beta^*(\bar x_3))$ is still the only equilibrium of \eqref{fast_system} 
in the interior of $[0,1]^2$ and it is asymptotically stable. However, the equilibrium 
$(0,0)$ undergoes a bifurcation at $x_3=\bar x_3$ and, for $x_3>\bar x_3$, 
there is a neighborhood of $(0,0)$ in $Q$ such that all solutions in that 
neighborhood are attracted to $(0,0)$.  

As far as the regularized solution, (i) or (ii) below may occur.

\begin{itemize}
\item[(i)]
The regularized solution $x(t)$ might remain close to $\Sigma$, in agreement with
Theorem \ref{ClassicSingPert}.
\item[(ii)] The regularized solution will leave a neighborhood of $\Sigma$ to enter $R_1$, 
if the corresponding solution of \eqref{fast_system} enters the neighborhood of solutions 
that reach $(0,0)$. 
\end{itemize}
In other words, the behavior of the regularized solution depends on the
possible bifurcations of $(\alpha^*,\beta^*)$ and of $(0,0)$.  Moreover,
we can choose the functions $\alpha$ and $\beta$ so that the terms 
$\frac{d\alpha}{dx_1}$ and $\frac{d\beta}{dx_2}$ will make
either one of (i) or (ii) above take place. 
See Example \ref{example_2} in Section \ref{Numerics_section} for an 
illustration of this fact.

\subsubsection{Exit point in the case of spiral dynamics}
We consider the case of $\Sigma$ spirally attractive and, based on the results in 
\cite{DieciSpiral}, we propose the following definition of potential spiral exit point. 
\begin{defn}\label{ExitPtSpiral}
Let $\bar x \in \Sigma$ and assume that there is spiral dynamics around $\Sigma$. 
We say that $\bar x$ is a potential spiral exit point if 
$\bar x$ is such that
$$ \frac{w^2_1(\bar x)w^1_3(\bar x)w^2_4(\bar x)w^1_2(\bar x)}
{w^1_1(\bar x)w^2_3(\bar x)w^1_4(\bar x)w^2_2(\bar x)}=1\ .$$
\end{defn}    
Again, we are only concerned with first order phenomena, i.e., phenomena such that the 
derivative with respect
to time of the quantity on the left hand side in Definition \ref{ExitPtSpiral} evaluated at $x(t)=\bar x$, 
is different from zero.

The equilibrium $(\alpha^*,\beta^*)$ of \eqref{fast_system} is always 
unique and well defined in $(0,1)^2$ for all $x_3 \in \R$ as long as there is spiral 
dynamics around $\Sigma$. The attractivity of $\Sigma$ though, does not imply the stability 
of the equilibrium. On the other hand, $(\alpha^*,\beta^*)$ might be stable when $\Sigma$ 
is not attractive. As an illustration of both phenomena, see Example \ref{example_3}. 
Furthermore, when the equilibrium of \eqref{fast_system} is unstable and the trajectories 
of \eqref{fast_system}  reach (possibly in finite time) one of the equilibria on the boundary 
of $Q$, it is the local dynamics around $\Sigma$ that determines the behavior of the 
regularized solution. This is well illustrated in Example \ref{example_3}, Figure 
\ref{spiral_ode23s_fig}, for $\epsilon_{\beta}=10\epsilon{\alpha}$.           
        
Based upon the above described situation insofar as different behaviors of the
regularized solution, it is natural to ask how 
should the solution of the original discontinuous system \eqref{GeneralPWS} behave once a 
potential first order exit point is reached, and how we should infer this.
We discuss this aspect in the next section.

\section{Numerical simulations for the discontinuous system}\label{Euler_section}

Frequently, Euler's method has been used as a
mean to approximate the behavior of  solutions of \eqref{GeneralPWS} in a 
neighborhood of the discontinuity surface. 
In \cite{Filippov} (proof of Theorem 1, page 77), Filippov showed that the solutions obtained 
with Euler's method converge to one of the solutions of Filippov's inclusion when the 
discretization stepsize goes to zero. 
In \cite{AlexSeid1}, Alexander and Seidman --in the case of a nodally attractive 
codimension 2 discontinuity surface $\Sigma$-- 
consider a chattering trajectory 
$x_{\epsilon}$ that evolves in an $\epsilon$-neighborhood of $\Sigma$. 
The trajectory $x_{\epsilon}$ is obtained by considering
the Euler approximation of the solution, and the fact that $\Sigma$ is nodally attractive
guarantees that --for a sufficiently small stepsize 
$\tau$-- the Euler's approximations remain in an $\epsilon$-neighborhood of $\Sigma$. 
Alexander and Seidman in \cite{AlexSeid1} show that \emph{every possible} Filippov
sliding vector field on $\Sigma$ is realizable;
i.e.,  given a solution $\bar x(t)$ of Filippov's differential inclusion, 
there exists a chattering trajectory $x_{\epsilon}$ that converges uniformly in $t$ to 
$\bar x$ in a given time interval. 

\subsection{Discretization}
We also use Euler's method as a tool to understand how the solutions of \eqref{GeneralPWS}
should behave in a neighborhood of the codimension $2$ 
discontinuity surface $\Sigma$.   However, we propose to do this in a new way, which
enables us to make (statistical) inferences on the {\emph{expected}} behavior of a trajectory.

First, we evolve by Euler's method several initial conditions in a neighborhood of $\Sigma$.
Second, in order to mimic the non ideal behavior of 
a physical system, at each step $k$, 
we use a random stepsize $\tau_k$ such that $\tau\leq \tau_k \leq 2 \tau$, where
$\tau$ is a fixed reference value. In case one of the computed approximations 
falls on a discontinuity surface\footnote{a case that has never occurred in the several
thousands experiments we have performed}, we take a random perturbation of size of machine 
precision, so that the perturbed value belongs to one of the regions $R_j$'s. 
Therefore, for each initial condition $x_0$, chosen in a $\tau$-neighborhood of $\Sigma$, 
we generate the following approximate solution
\begin{equation}\label{euler_sol}
x_{k+1}=\left \{ \begin{matrix} x_k+\tau_k f_j(x_k), & x_k \in R_j , \\ 
x_k+\delta_k +\tau_k f_j(x_k+\delta_k), & x_k \in \Sigma, \Sigma_{1,2}, \end{matrix} \right .
\end{equation} 
where $\delta_k>0$ is a random perturbation the same size of machine precision.
We perform two types of experiments with the scheme \eqref{euler_sol}.
\begin{itemize}
\item[(a)]
We generate 100 (or more) random initial conditions in a $\tau$-neighborhood of $\Sigma$, and evolve 
each of them according to \eqref{euler_sol} on a given time interval of interest.  
We further monitor, see below, exit points for each
trajectory, and perform statistics on these.
\item[(b)]
For a given (randomly generated) initial condition in a $\tau$-neighborhood of $\Sigma$, we
generate 100 trajectories according to \eqref{euler_sol}, on a given time interval of interest.    
Of course, becase of the randomnes in the stepsize selection process, these will give approximations
at different times.  Thus, we further interpolate linearly the given trajectories on a fixed temporal
grid with spacing $\tau$, that is at times $0,\tau, 2\tau, \dots\ $.  Finally, we compute an
{\emph{average trajectory}} by averaging the obtained 100 approximations on the fixed grid.
We also compute exit points, etc., with respect to this average trajectory.
\end{itemize}

\subsection{Construction of the experiments}\label{ConstrExp}
As previously remarked, our experiments are all in $\R^3$ or $\R^4$, and with
discontinuity surfaces given by $h_1(x)=x_1$ and $h_2(x)=x_2$.  Moreover, we choose the
vector fields so that:
\begin{itemize}
\item All but one of the $w^i_j$'s are constant and they are in absolute value less than $1$. 
The non-constant $w^i_j$ is a function of the slow variables $x_3, \ldots, x_n$ and it 
is chosen in such a way that $\Sigma$ changes from locally 
attractive in finite time to non  attractive; \\
\item No vector field in the Filippov differential inclusion has equilibria (or more
complicated invariant sets) on $\Sigma$.
\end{itemize}

\subsection{Exits}
An important aspect of our simulations will be to monitor if a Euler approximation
$\{x_k\}_{k \in \N}$ of \eqref{GeneralPWS}, computed as in \eqref{euler_sol},
leaves a neighborhood of $\Sigma$ when $\Sigma$
loses attractivity.  To perform this task, we reasoned as follows.
(The choices below are appropriate for the vector fields we chose, see Section \ref{ConstrExp}).
\begin{itemize}
\item[(a)] \emph{Exit on a codimension $1$ surface} [Tangential Exits]. 
Suppose that $\Sigma$ loses attractivity at a tangential exit point
for which $f_{\Sigma_1^+}$ becomes tangent to $\Sigma$.
We monitor the function $h_2$ and declare that the numerical solution 
leaves a neighborhood of $\Sigma$ by checking that $h_2(x_k)>10 \tau$.
When this is the case, we define as ``exit point'' the 
value $\bar x=\frac{x_{\bar k}+x_{\bar k+1}}{2}$, 
where the index $\bar k$ is the first index $\bar k$ for which
$h_2(x_{\bar k})>1.5 \tau$ together with 
$h_2(x_j)>\tau$ for all $j \geq \bar k$.  Note that we do this both for the
100 trajectories generated by \eqref{euler_sol} with different initial
condition, as well as for the average trajectory.
\item[(b)] \emph{Exit in one of the $R_j$'s} [Non Tangential Exits].
Suppose, for example, that the non-tangential exit is into the region $R_1$. 
We observe that, at the non-tangential exit point, $f_1$ is tangent either to 
$\Sigma_1$ or to $\Sigma_2$, while pointing away from $\Sigma$. 
This implies the existence of repulsive siding along $\Sigma_1$ or $\Sigma_2$  
with sliding vector field $f_1$ at the non-tangential exit point. 
Hence, we still need to detect first the exit on a codimension 1 sliding surface,
which we do as in (a).
\item[(c)] \emph{Exit from spiral case}.  As long as $\Sigma$ is spirally attractive,
motion around $\Sigma$ now repeatedly takes the trajectory inside all of the regions
$R_1,\dots, R_4$. To decide if the trajectory leaves a neighborhood of $\Sigma$ (when
the spiral motion ceases to be attractive), 
we monitor the 2-norm of the vector $(h_1,h_2)$ 
for the last computed numerical value in $R_4$, before the solution enters the 
neighboring region $R_2$ (clockwise motions around $\Sigma$) or $R_3$ 
(counterclockwise motion).   We do this 
for the trajectories associated to different initial
conditions, and declare an exit when the $2$-norm of $(h_1,h_2)$ becomes strictly monotone increasing. 
\end{itemize}

\subsection{Summary and limitations}
As we report in the next section, 
based upon our experiments with Euler method with random stepsizes,
we can thus summarize our findings.
\begin{itemize}
\item All computed solutions remain in a $O(\tau)$-neighborhood 
of $\Sigma$ as long as $\Sigma$ is attractive.  This confirms that, on an ideal system, sliding 
should be taking place along $\Sigma$.  
\item All computed solutions move away from 
$\Sigma$ when they reach a sufficiently small neighborhood of a potential exit point.
\end{itemize}

There are important caveats to our results.
\begin{itemize}
\item
If the discontinuous dynamical system is the 
``idealization'' of a known smooth system, the approach described above may be 
misleading if we want to understand what happens to the solutions of the original
system.  For example, if the original system is given by \eqref{bilinear_reg},
as the regularization parameter goes to zero, one may obtain a limiting behavior which differs 
from the behavior of the discontinuous system.
What this means is that 
in this case the discontinuous model \eqref{GeneralPWS} is not a good model; 
as an example of this, see \cite{Polynikis.Hogan.diBernardo}. 
\item  Another class of systems that does not fit 
our analysis is given by control systems with non linear controls for which
the Filippov convex combination would not contain all the neighboring values of the 
vector fields, and hence our Euler's approximations of \eqref{GeneralPWS}
are not sufficient to understand the behavior of solutions (see \cite{Utkin.2} 
for a general theory).  
\end{itemize}

\section{Numerical experiments}\label{Numerics_section}

This section presents results of numerical simulations on several examples in $\R^3$ and $\R^4$. 
On each example, we compare the results of different experiments carried out with some of
the following strategies.

\begin{itemize}
\item[1)]
{\emph{Random Euler}}.  This refers to the approximation
of \eqref{GeneralPWS} computed with Euler method with random stepsizes as discussed in Section 
\ref{Euler_section}.  
\item[2)]
{\emph{Deterministic Euler}}.  This is the fixed stepsize implementation of Euler method
directly on \eqref{GeneralPWS} (see item (a) at p. \pageref{Possibilities}).
\item[3)]
{\emph{Regularized integration}}.  This refers to the approximation of
the bilinear regularized  vector field as proposed in Section \ref{Br_section}, by integrating
the regularized system  with the {\tt Matlab} built in functions
{\tt ode45}, and/or{\tt ode23s}, and/or {\tt ode15s}.\footnote{{\tt ode45} is a
Runge-Kutta integrator, suitable for non-stiff problems, {\tt ode23s} is a second order integrator
based on Rosenbrok formulas, better suited for stiff problems, and {\tt ode15s} is a variable
order (1 through 5) integrator based on the backward-differentiation-formulas (BDF), also suited
for stiff problems.}  All of these are variable stepsize integrators, where
local error control is enforced by a combination of relative and absolute
error tolerances ({\tt Reltol} and {\tt Abstol}).  (It is surely possible that
different solvers may perform somewhat differently than those we have used, but we have
no reason to suspect that a different solver will alter the outcome of our observations below.)
\item[4)]
{\emph{Unregularized integration}}.  This refers to the approximation 
of \eqref{GeneralPWS} computed (as a discontinuous system) with the {\tt Matlab} built in 
functions {\tt ode45} and/or {\tt ode23s} and/or {\tt ode15s}.
\end{itemize}

The most noteworthy aspects confirmed by our
numerical experiments are the following.
The numerical solution computed with ``Random Euler'' remains close to $\Sigma$ 
as long as $\Sigma$ is locally attractive, and instead leaves any small neighborhood of $\Sigma$ when 
$\Sigma$ looses attractivity.  On the other hand, the approximations computed by the ``Regularized 
integration''  might remain in a neighborhood of $\Sigma$ even when $\Sigma$ is not attractive 
in agreement with the 
dynamics of \eqref{fast_system}.  Also, ``Deterministic Euler'' is not a foolproof
option, since it superimposes its own dynamics to the true dynamics of the underlying problem.
Finally, ``Unregularized integration'' may just fail when requiring 
stringent values of the error tolerances, and also occasionally producing totally erroneous
approximations, and 
cannot be taken as a trustworthy indicator of what should happen in our context. 

For our experiments, we will adopt the simplifications discussed in 
Section \ref{Euler_section}.  
As already anticipated in Section \ref{Euler_section}: the examples are in $\R^3$ and $\R^4$, 
the discontinuity surfaces are $\Sigma_j=\{ x \in \R^n, \,\, x_j=0\}$, $j=1,2$ and the $w^i_j$'s 
are all constants except for one of them that depends on $x \in \Sigma$.  
Moreover, with the exception of Example \ref{example_4}, 
in all the examples below the Filippov vector field is uniquely defined. 
This is not a restriction for our scopes, since the behavior of solutions
(whether or not for the regularized system) at potential exit 
points remains ambiguous, as we will see below.    
 
\begin{exam}[$\Sigma$ looses attractivity through a potential 
tangential exit point]\label{example_1}

Consider the following vector fields
\begin{equation}\label{example_tangential}
\begin{split}
f_1=\begin{pmatrix} \frac 14  \\ 2-\frac {0.9}4 -x_3^2-x_4^2 \\ -\frac 25 x_3+x_4 \\ 
-\frac 25 x_3+\frac 45 x_4 \end{pmatrix}, \,\ 
f_2=\begin{pmatrix} 1 \\ -0.3 \\   -\frac 25 x_3+x_4 \\ -\frac 25 x_3+\frac 45 x_4 \end{pmatrix}, \\
f_3=\begin{pmatrix} -1 \\ 0.9 \\  -\frac 25 x_3+x_4 \\ -\frac 25 x_3+\frac 45 x_4 \end{pmatrix}, \quad \,\
f_4=\begin{pmatrix} -0.25 \\ -0.15 \\ -\frac 25 x_3+x_4 \\ -\frac 25 x_3+\frac 45 x_4 \end{pmatrix}\ .
\end{split}
\end{equation}
$\Sigma$ is the $(x_3,x_4)$ plane and on it there is the unique
vector field $\dot x=\begin{pmatrix} 0\\0\\ -\frac 25 x_3+x_4 \\ -\frac 25 x_3+\frac 45 x_4 \end{pmatrix}$.
The circle $\gamma=\{(x_3,x_4) :\,\ x_3^2+x_4^2=2\}$ is a curve of potential tangential exit points
on $\Sigma$. The region inside $\gamma$ is attractive upon sliding. It is attractive upon 
sliding along $\Sigma_{1,2}^\pm$ for $x_3^2+x_4^2 \leq 2-\frac{0.9}{4}$ and it is attractive 
upon sliding along 
$\Sigma_1^\pm$ and $\Sigma_2^+$ for $2-\frac{0.9}4<x_3^2+x_4^2<2$, {\rm (}see \cite{DiElLo2} 
for theoretical studies of an attractive co-dimension $2$ surface $\Sigma$ when one of the 
$w^i_j$'s is zero{\rm )}. Outside $\gamma$, 
$f_{\Sigma_1^-}$ points away from $\Sigma$ so that $\Sigma$ is not attractive.
All solutions with 
initial condition inside $\gamma$ will eventually meet $\gamma$. 
We expect trajectories of \eqref{GeneralPWS} to leave $\Sigma$ when they reach $\gamma$ 
and to start sliding along $\Sigma_1^-$ in direction opposite to $\Sigma$. 
This behavior is well illustrated in the right plot of Figure \ref{S1minus_fig}, 
where we depict the average solution computed with ``Random Euler'' with $\tau=10^{-6}$.
Next, we report on four more experiments.
\begin{itemize}
\item[a)]  Random Euler.
We computed the average exit point for an ensamble of $100$ initial conditions with 
$x_1$ and $x_2$ uniformly distributed in $[-\tau, \tau]^2$ and $x_3^2+x_4^2=1.7$. 
We compute the exit point for every solution as described in Section \ref{Euler_section}. 
For $\tau=10^{-6}$, we obtain the average exit point $\bar x \simeq 2.0865$ with 
standard deviation $\simeq 0.0202$. In the left plot in Figure \ref{S1minus_fig}, we plot the 
average trajectory obtained with stepsize $\tau=10^{-6}$. The exit point for the 
average trajectory is $\bar x \simeq 2.08$.
\item[b)] Regularized Integration.
For the regularized vector field \eqref{bilinear_reg}, we take $\alpha$ and 
$\beta$ as in \eqref{alp_bet_eq}. The corresponding fast 
system \eqref{fast_system} has a unique asymptotically stable equilibrium in $(0,1)^2$, a stable node, up to 
$x_3^2+x_4^2 = \bar \rho\simeq 2.225$. The curve $x_3^2+x_4^2=\bar \rho$ is a curve of 
saddle node bifurcation values for the fast system and for $x_3^2+x_4^2>\bar \rho$ there are 
no equilibria in $(0,1)^2$.  
(For this example, the behavior of \eqref{fast_system} does not change by choosing different
functions $\alpha$ and $\beta$ in \eqref{alp_bet_eq}, as long as they satisfy conditions 
\eqref{conditions}).  
The behavior of the solution of the regularized system is well illustrated in the 
right plot in Figure \ref{S1minus_fig}. The solution of the regular problem is 
computed with the Matlab function {\tt ode23s} and 
{\tt RelTol=AbsTol=}$10^{-9}$. The continuous line in the plot is the first component 
of the solution, while the dashed line is the second component. They are both plotted in function 
of $z=x_3^2+x_4^2$. 
\begin{figure}\caption{Example \ref{example_1}. 
Left: Average trajectory obtained with Random Euler and $\tau=10^{-6}$.  
Right: Regularized Integration, with $\epsilon_1=\epsilon_2=10^{-5}$ 
and initial condition $[10^{-3} \,\ 10^{-3} \,\ 0 \,\ \sqrt{1.5}]$. }\label{S1minus_fig}
\begin{tabular}{cc}
\includegraphics[width=150pt]{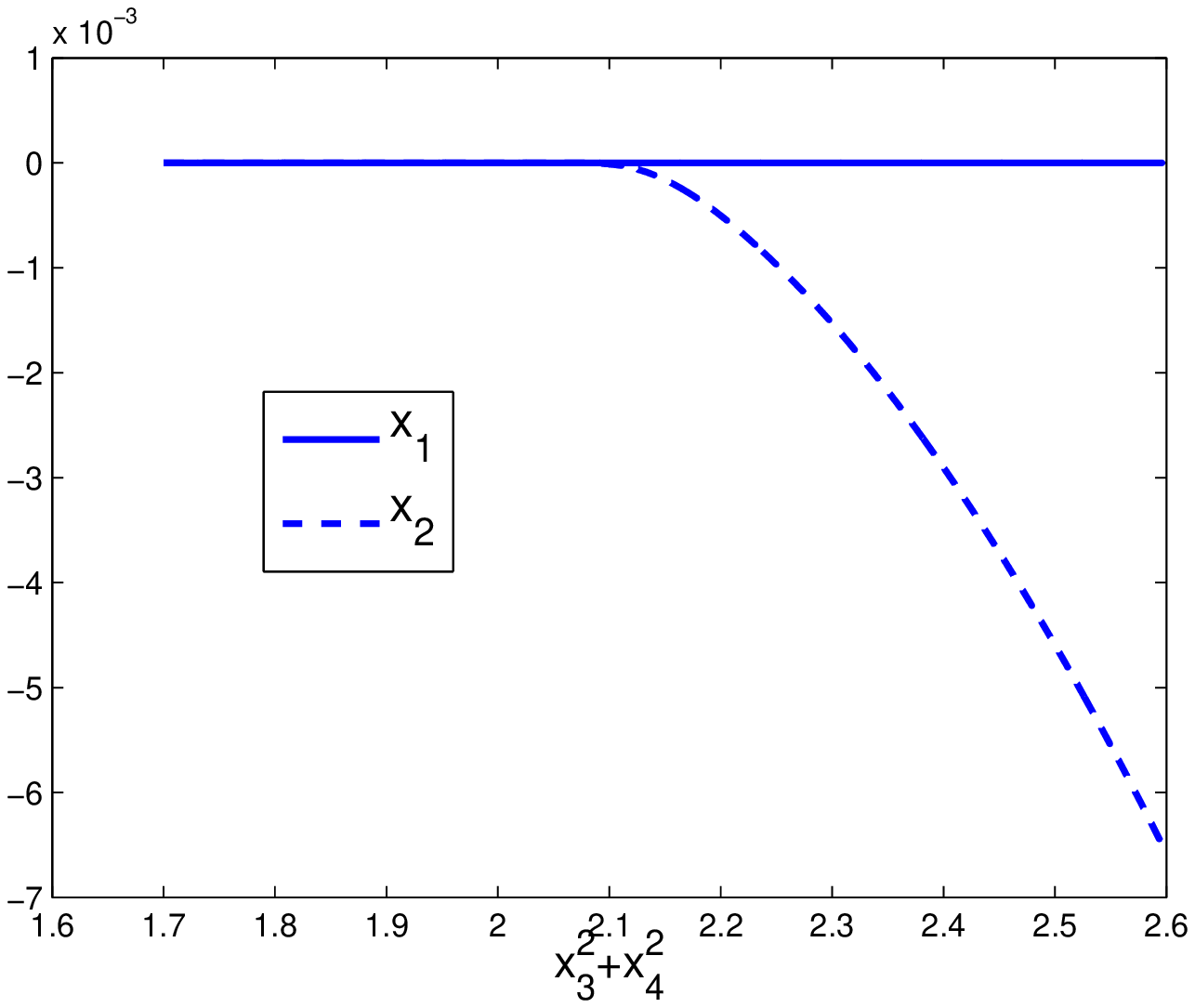} &
\includegraphics[width=150pt]{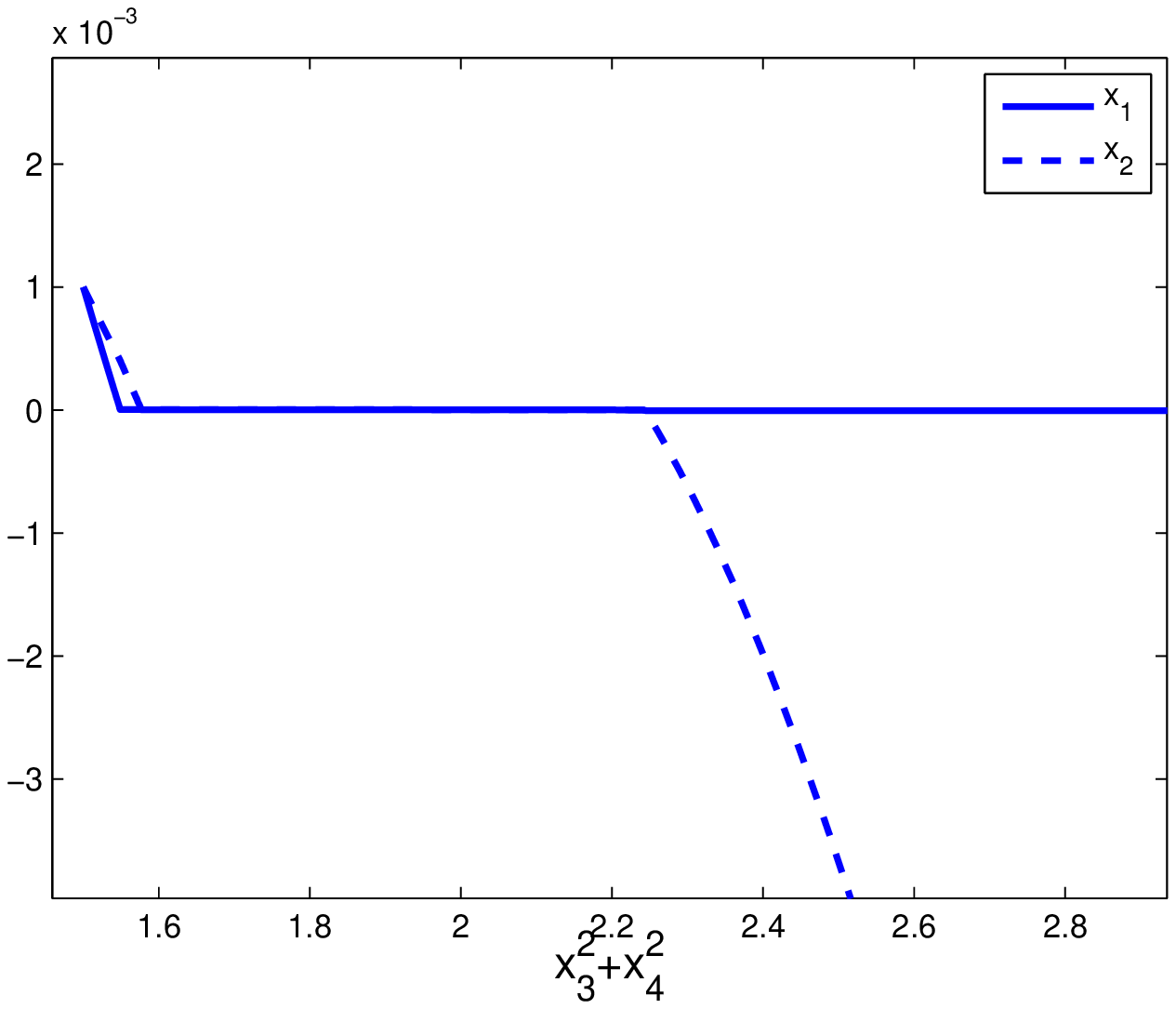} 
\end{tabular}
\end{figure}
\item[c)]
We also computed the solution of \eqref{GeneralPWS} with ``Deterministic Euler'' and fixed
stepsize $\tau=10^{-3}$, and 
initial condition $(10^{-4},10^{-4},0,\sqrt{1.7})$. First, 
we get rid of the transient and then plot the first two components of the approximation in 
Figure \ref{4d_S1minus_euler_po_fig}. None of the elements of the sequence generated by the
forward Euler's map is on $\Sigma_1$ or $\Sigma_2$ and hence the computed solution is 
always in one of the $R_j$'s and the selection of the vector field is straightforward. 
As it is clear from the plot, we do not recover the dynamics of the original system. 
The periodic orbit in the $(x_1,x_2)$ plane is spurious and is generated by forward Euler's own
dynamics. The periodic orbit survives past the curve of potential tangential exit points and 
indeed the third and fourth component of the plotted solution are such that 
$1.5 \leq x_3^2+x_4^2 \leq 30.817$. Notice also that the computed solution is 
trapped in a neighborhood of $\Sigma$ outside $\gamma$, when
$\Sigma$ is not attractive . Periodic motion persists also for smaller values of $\tau$,
though the orbit shrinks to the origin as $\tau\to 0$. 
\item[d)] Unregularized integration. Here, this approach works well for relaxed values of
the tolerances.  To witness, the numerical solution computed with {\tt ode23s} and 
{\tt RelTol}$=${\tt AbsTol}=$10^{-7}$, stays close to $\Sigma$ up to 
$x_3^2+x_4^2 \simeq 2.0014$. It then leaves 
$\Sigma$ to slide on $\Sigma_1^-$. With lower values of {\tt RelTol} and {\tt AbsTol}, 
the integrator takes more than half an hour in the time interval $[0, 0.5]$.  
The integrator {\tt ode45} shows a similar behavior.  
\end{itemize}
 
\begin{figure}
\caption{Example \ref{example_1}. Numerical solution computed with 
``Deterministic Euler'' and constant stepsize $\tau=10^{-3}$. }
\label{4d_S1minus_euler_po_fig}
\includegraphics[width=150pt]{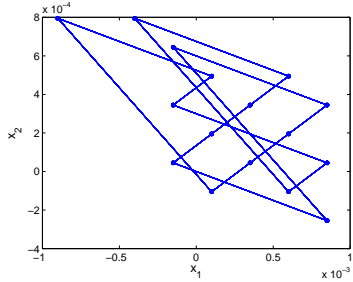}
\end{figure}  

\end{exam}

\vspace{.5cm}

\begin{exam}[$\Sigma$ looses attractivity through a
non tangential potential exit point]\label{example_2} 

The vector fields are
\begin{equation}\label{example_nontang}
f_1=\begin{pmatrix} (3-x_3)/5 \\ -1/5 \\ 1 \end{pmatrix}, \,\ 
f_2= \begin{pmatrix} 1/5 \\ -1/5 \\ 1 \end{pmatrix}, \,\ 
f_3=\begin{pmatrix} 1/5 \\ 2/5 \\ 1 \end{pmatrix}, \,\ 
f_4=\begin{pmatrix} -1 \\ -1/5  \\ 1 \end{pmatrix},
\end{equation}
and $\Sigma$ is the $x_3$-axis, with uniquely
defined sliding vector field $\dot x_3=1$.
$\Sigma$ is attractive through sliding along 
$\Sigma_1^+$ and $\Sigma_2^+$ 
for $x_3<3$. At $x_3=3$, $f_1$ is tangent to $\Sigma_1$ 
and points away from $\Sigma$ so that the point $(0,0,3)$ is 
a potential non tangential exit point. When $x_3>3$, the vector field $f_1$ 
points away both from $\Sigma_1$  and $\Sigma_2$ and we expect the solution of 
\eqref{GeneralPWS} to move away from $\Sigma$ and to enter $R_1$. 
\begin{itemize}
\item[a)]  Random Euler. This behaves qualitatively as we expect.
We take $100$ initial conditions uniformly distributed in 
$[-\tau,\tau]^2$.  With $\tau=10^{-4}$, the average ``exit point'' is $\bar x=2.9854$ 
with standard deviation $x_{s}=0.0122$. With $\tau=10^{-5}$, we obtain 
$\bar x=2.9923$, $x_{s}=0.0046$. The average trajectory obtained with stepsize $\tau=10^{-5}$ 
is plotted in Figure \ref{nontang_avg_traj_fig}. The exit point for the trajectory is $\bar x \simeq 2.9944$.
\begin{figure}\caption{System \eqref{example_nontang}. Average trajectory computed with Random Euler and 
stepsize $\tau=10^{-5}.$}\label{nontang_avg_traj_fig}
\includegraphics[width=150pt]{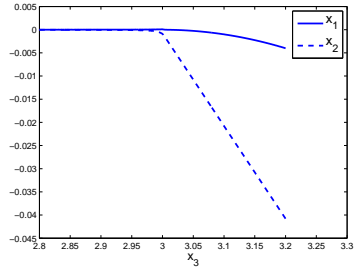}
\end{figure}

\item[b)] Regularized Integration.
For the regularized vector field \eqref{bilinear_reg}, we take $\alpha$ and $\beta$ as in 
\eqref{alp_bet_eq}, but we make different choices for the parameter $\epsilon$. 
We use two different parameters for 
$\alpha$ and $\beta$, namely $\epsilon_{\alpha}=\epsilon_{\beta}=10^{-6}$ and 
$\epsilon_{\alpha}=10^{-6}$, $\epsilon_{\beta}=10^{-5}$. The corresponding fast systems 
\eqref{fast_system} and \eqref{fast_system2} 
are not orbitally equivalent and, as a consequence of this, the behavior of the regularized 
solutions differs as well. We first consider the regularized system for $\epsilon_{\alpha}=\epsilon_{\beta}$. 
In this case, $(\alpha^*(x_3),\beta^*(x_3))$ is an asymptotically stable equilibrium of 
\eqref{fast_system} in $(0,1)^2$ for $x_3 < \bar x_3 \simeq 3.3853$. For $x_3=\bar x_3$, the eigenvalues 
of the Jacobian matrix of \eqref{fast_system} at the equilibrium cross the imaginary axis and 
$(\alpha^*(x_3),\beta^*(x_3))$ undergoes a subcritical Hopf bifurcation.  
In Figure \ref{nontang_fast_po_fig}, on the left, we plot both the stable equilibrium and the 
unstable periodic 
orbit for \eqref{fast_system} and $x_3=3.8$. The periodic orbit separates the
solutions that converge to the equilibrium $(\alpha^*,\beta^*)$ from the solutions 
that reach (possibly in finite time) the boundary of $[0,1]^2$. Notice that, 
if the initial condition of \eqref{full_system} satisfies (iii) of Theorem \ref{ClassicSingPert}, 
then b) in Theorem \ref{ClassicSingPert} follows, and the regularized solution remains in a 
neighborhood of $\Sigma$ as long as $(\alpha^*(x_3),\beta^*(x_3))$ is attractive. 
As a consequence of this reasoning,        
for $\epsilon$ sufficiently small, we expect the solution of \eqref{full_system} 
to remain close to $\Sigma$ up to $x_3=\bar x_3$. For $x_3>\bar x_3$, all solutions inside 
$(0,1)^2$ reach the boundary of the square, and we expect the regularized solution to 
move away from $\Sigma$. 
This is well illustrated in the left plot of Figure \ref{nontang_fig}. 
The equilibrium of \eqref{dummy} undergoes a subcritical Hopf bifurcation at 
$x_3=\hat x_3  \simeq 3.4476$. For $x_3> \hat x_3$, all solutions leave the square $[0,1]^2$. 
In the right plot of Figure \ref{nontang_fast_po_fig}, we show the unstable periodic orbit 
together with the stable equilibrium for \eqref{dummy} with $x_3=3.447$.   

\begin{figure}\caption{Example \ref{example_2}. 
Left: unstable periodic orbit and equilibrium $(\alpha^*,\beta^*)$ of \eqref{fast_system}
for $x_3=3.8$. 
Right: unstable periodic orbit and equilibrium of \eqref{dummy} for $x_3=3.447$.}
\label{nontang_fast_po_fig}
\includegraphics[width=150pt]{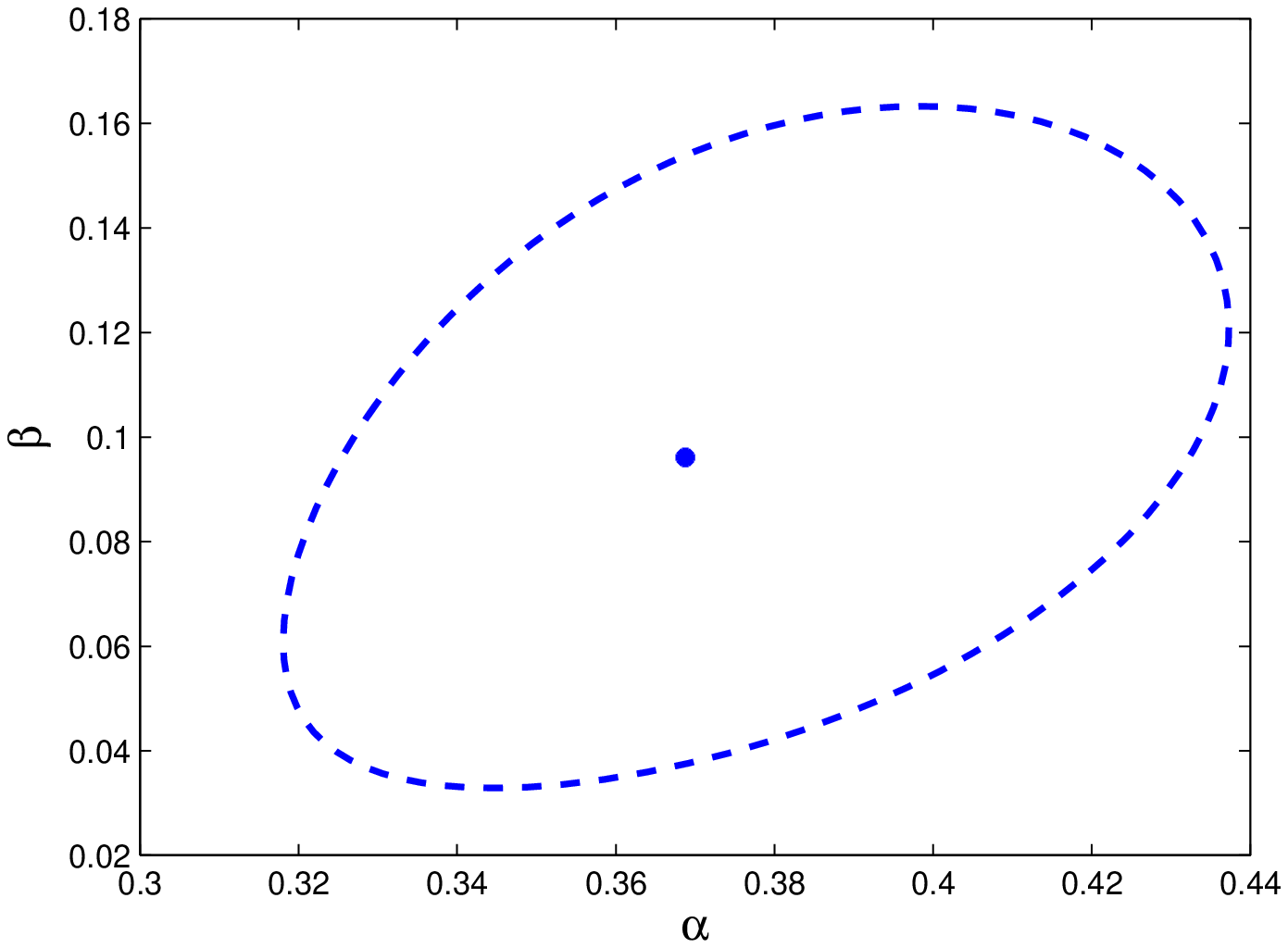}
\includegraphics[width=150pt]{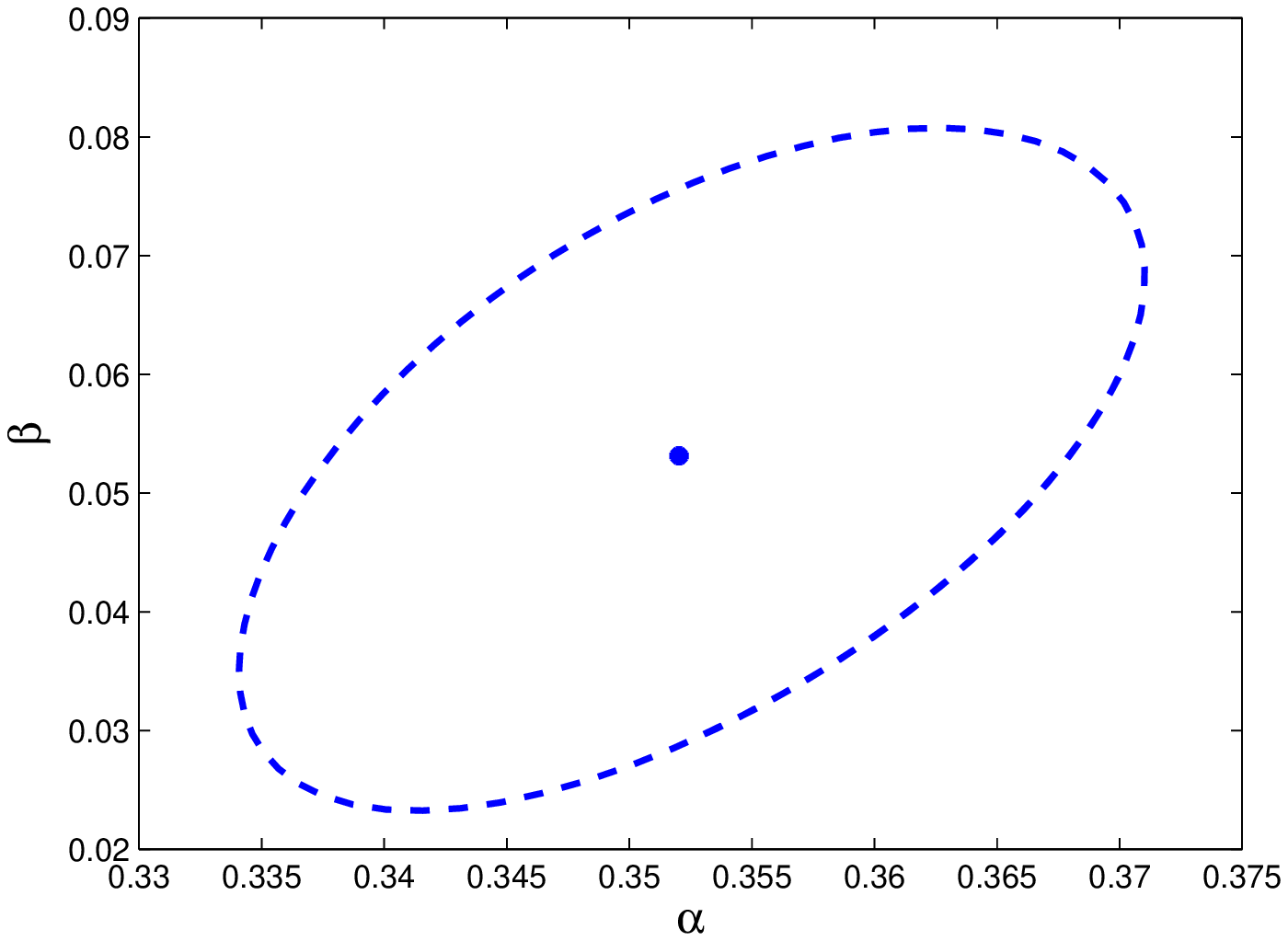}
\end{figure}
With the second set of parameter values, $\epsilon_{\alpha}=10^{-5}$ and 
$\epsilon_{\beta}=10^{-6}$, the dynamics of \eqref{fast_system2} is similar, 
but the bifurcation values are different.   
The equilibrium $(\alpha^*(x_3),\beta^*(x_3))$ of \eqref{fast_system2} 
is stable up to $x_3=\bar x_3 \simeq 3.2296$ and at $x_3=\bar x_3$ it undergoes a subcritical 
Hopf bifurcation. In the left plot of Figure \ref{nontang_fast_dynamics} we plot both the 
unstable periodic orbit and the stable equilibrium of \eqref{fast_system2} 
for $\epsilon_{\alpha}=10^{-6}$, $\epsilon_{\beta}=10^{-5}$ and 
$x_3=3.22$.  We also integrate the fast system backward in time for $x_3=3.21$ 
from an initial condition in a  neighborhood of the equilibrium. 
There is no periodic orbit in this case, since the backward solution meets the boundary 
of $[0,1]^2$ in finite time. The star in the plot marks the equilibrium $(\alpha^-,0)$ 
of the fast system.  This is the sliding vector field on $\Sigma_1^-$. 
\begin{figure}\caption{Example \ref{example_2}. 
Left Plot: unstable periodic orbit and stable equilibrium for 
the fast system \eqref{fast_system2}, $\epsilon_{\alpha}=10^{-6}$, $\epsilon_{\beta}=10^{-5}$ 
and $x_3=3.22$. Right plot: backward trajectory for the fast system for $x_3=3.21$. }
\label{nontang_fast_dynamics}
\includegraphics[width=150pt]{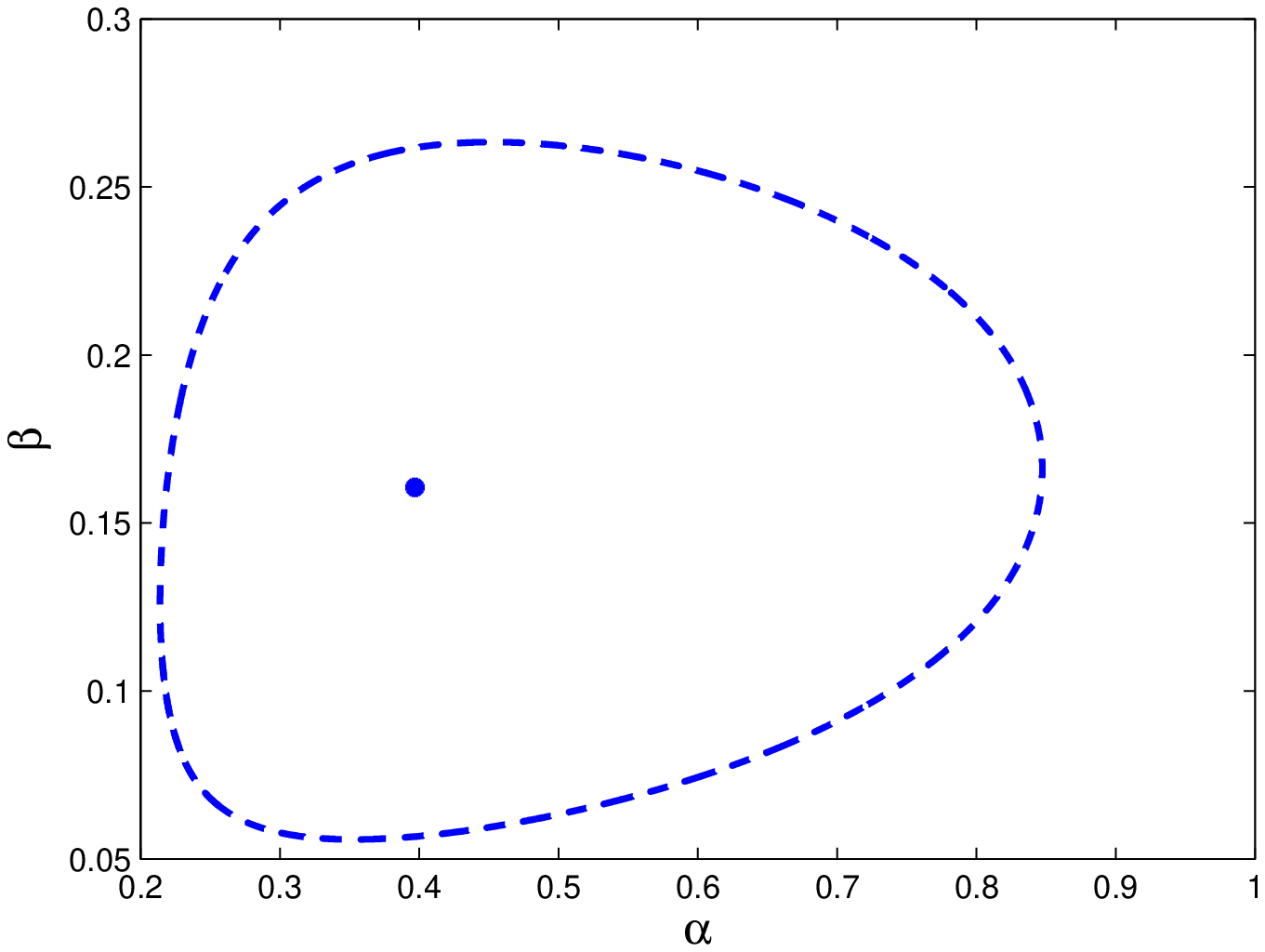}
\includegraphics[width=150pt]{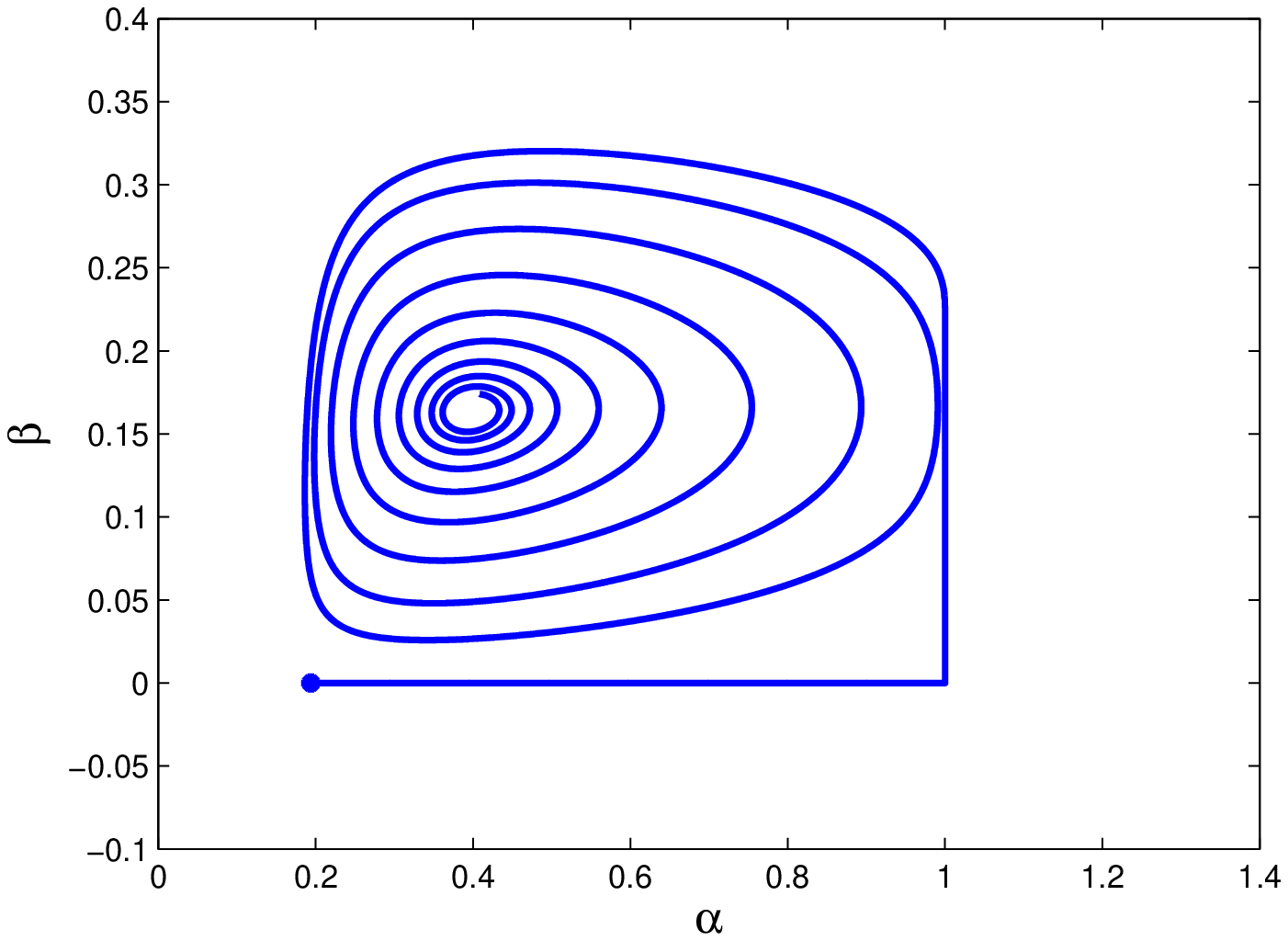}
\end{figure} 

On  the left of Figure \ref{nontang_fig}, we plot the numerical solution obtained 
integrating the regularized problem \eqref{bilinear_reg}, $\dot x=f_B^\epsilon(x)$,
with {\tt ode23s} and {\tt RelTol=AbsTol}$=10^{-12}$ for 
$\epsilon_{\alpha}=\epsilon_{\beta}=10^{-6}$.
The continuous line is the first component of the solution, while the dotted 
line is the second component. They are plotted against the third component. 
The solution stays close to $\Sigma$ up to $x_3 \simeq 3.39$ and then it enters $R_1$. 
This is in agreement with the singular perturbation analysis.
For the second set of parameters, $\epsilon_{\alpha}=10^{-6}$, $\epsilon_{\beta}=10^{-5}$,
the numerical solution obtained with {\tt ode23s} remains close 
to $\Sigma$ up to $x_3 \simeq 3.369$, and this is not in agreement with the singular perturbation 
analysis.   The numerical solution obtained with {\tt ode15s} is even less accurate and exits at 
$x_3 \simeq 3.485$.   We integrated the problem also with lower values of $\epsilon_\beta$,
while still having $\epsilon_{\alpha}=\frac{\epsilon_{\beta}}{10}$ so that the corresponding fast 
systems are all equivalent.  The numerical approximations are even less reliable, and indeed the
computed exit values increase as $\epsilon_\beta$ decreases.   On the other hand, 
the numerical solution computed with {\tt ode45} behave more reliable leaves $\Sigma$ to  
enter $R_1$ for $x_3 \simeq 3.238$, as we can see from the right plot in Figure \ref{nontang_fig}.

\begin{figure}\caption{Example \ref{example_2}. 
Solution of the regularized system \eqref{bilinear_reg} with {\tt ode45}.
Left: $\epsilon_\alpha=\epsilon_\beta=10^{-6}$. 
Right: $\epsilon_\alpha=10^{-6}$ and $\epsilon_\beta=10^{-5}$. }\label{nontang_fig}
\begin{tabular}{cc}
\includegraphics[width=150pt]{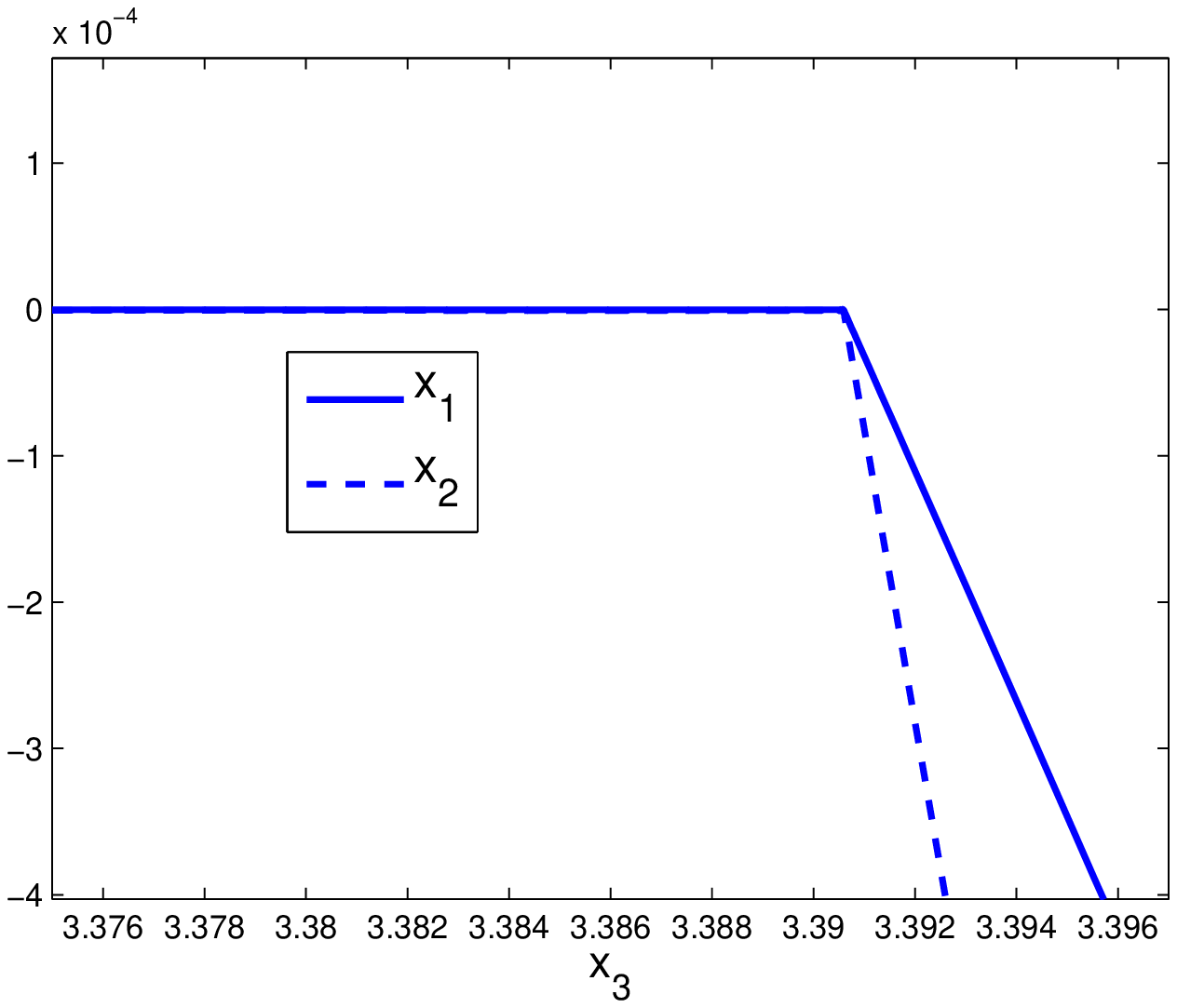} &
\includegraphics[width=150pt]{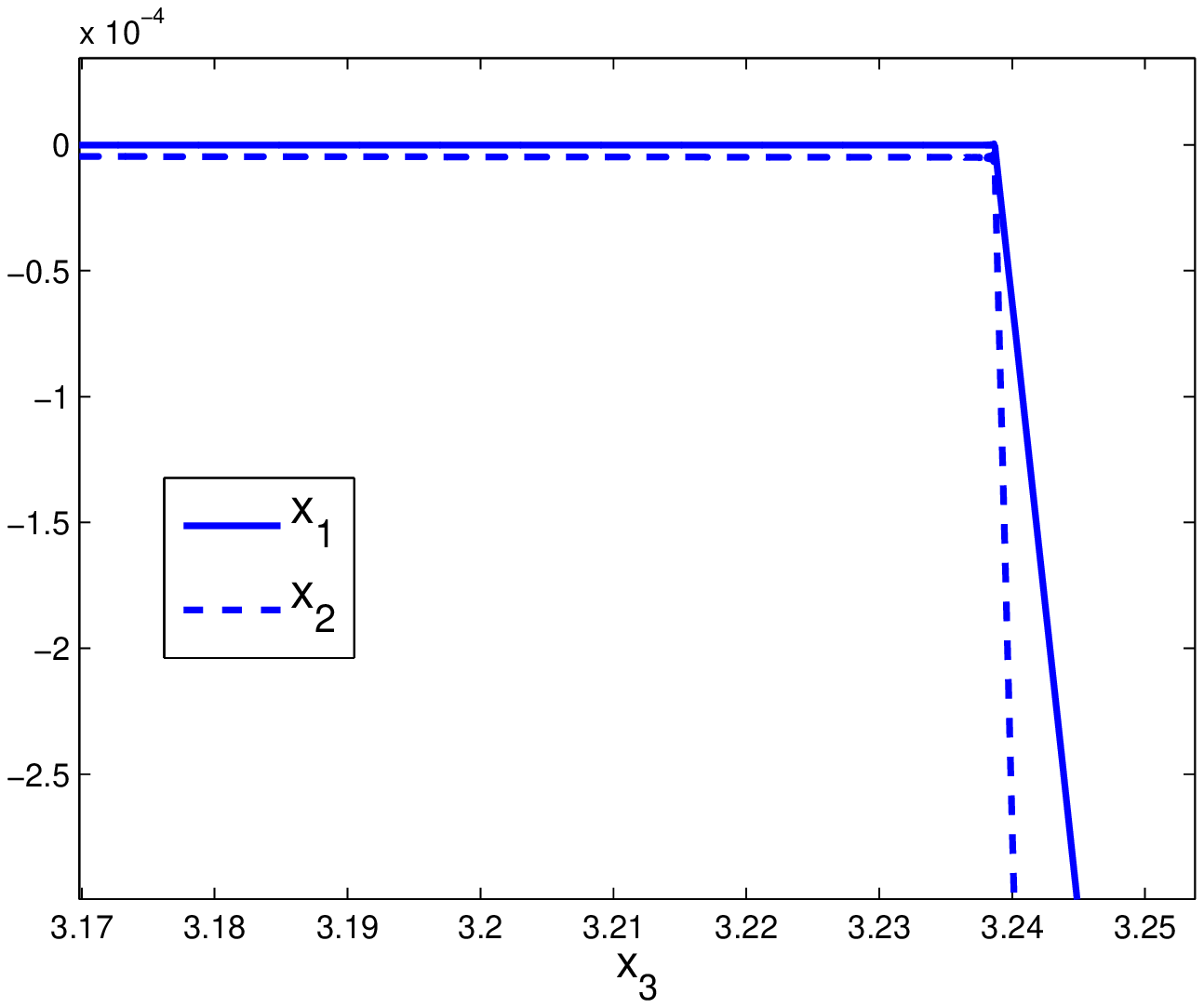} 
\end{tabular}
\end{figure}
\end{itemize}

\end{exam}

\vspace{0.5cm}

\begin{exam}[Spiral dynamics around $\Sigma$]\label{example_3}

In this example the vector fields are 
\begin{equation}\label{example_spiral}
f_1=\begin{pmatrix} \frac 13 \\ -\frac{x_3}3 \\ 1 \end{pmatrix}, \,\ 
f_2=\begin{pmatrix} -\frac 23 \\ -1 \\ 1 \end{pmatrix}, \,\ 
f_3=\begin{pmatrix} \frac 13 \\ \frac 23 \\ 1 \end{pmatrix}, \,\ 
f_4=\begin{pmatrix} -\frac 13 \\ 1  \\ 1 \end{pmatrix}.
\end{equation}
$\Sigma$ is the $x_3$-axis, with uniquely defined Filippov sliding motion $\dot x_3=1$,
and there is spiral like dynamics around $\Sigma$. For 
$x_3<1$, $\Sigma$ is attractive in finite time while, for $x_3>1$, $\Sigma$ is not locally 
attractive. 
\begin{itemize}
\item[a)] Random Euler.
Approximations computed with ``Random Euler''  move away from $\Sigma$ for $x_3>1$. 
In Figure \ref{spiral_fig} we show the average trajectory computed with 
$\tau=10^{-5}$ and initial condition $(10^{-6},10^{-6},0.5)$.  
\begin{figure}\caption{Example \ref{example_3}. 
Average trajectory obtained with Random Euler and $\tau=10^{-5}$.}
\label{spiral_fig}
\begin{tabular}{cc}
\includegraphics[width=150pt]{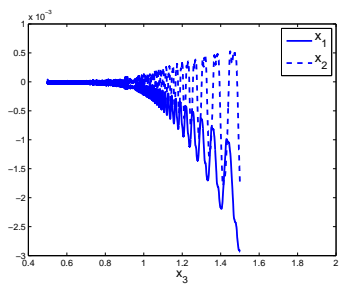} &
\end{tabular}
\end{figure}
\noindent For a statistical estimate of the exit point, we consider an ensamble of $100$ 
initial conditions with the first two components uniformly distributed in 
$[-\tau,\tau]^2$. For $\tau=10^{-5}$, 
the mean value of $x_3$ at the exit point is $\bar x_3 \simeq 0.94777$, with 
standard deviation $\simeq 0.04297$.
For $\tau=10^{-6}$, the mean value of $x_3$ at the exit point is $\bar x_3 \simeq 0.97910$ and 
the standard deviation is $\simeq 0.00529$.  
In Figure \ref{spiral_fig2}, on the left, we plot a trajectory 
of \eqref{GeneralPWS} obtained with Random Euler and stepsize $\tau=10^{-6}$.
On the right of Figure \ref{spiral_fig2} we plot the $2$-norm of the vector 
$(h_1,h_2)$ (for us, this is the vector $(x_1,x_2)$)
in function of $x_3$. 
The ``X'' in the plot marks the estimated exit point. 

\begin{figure}\caption{Example \ref{example_3}. 
Left: a solution of \eqref{example_spiral} obtained with Random Euler and $\tau=10^{-5}$. 
Right: norm of the vector $(h_1,h_2)$ in function of $x_3$ for this trajecytory.}
\label{spiral_fig2}
\begin{tabular}{cc}
\includegraphics[width=150pt]{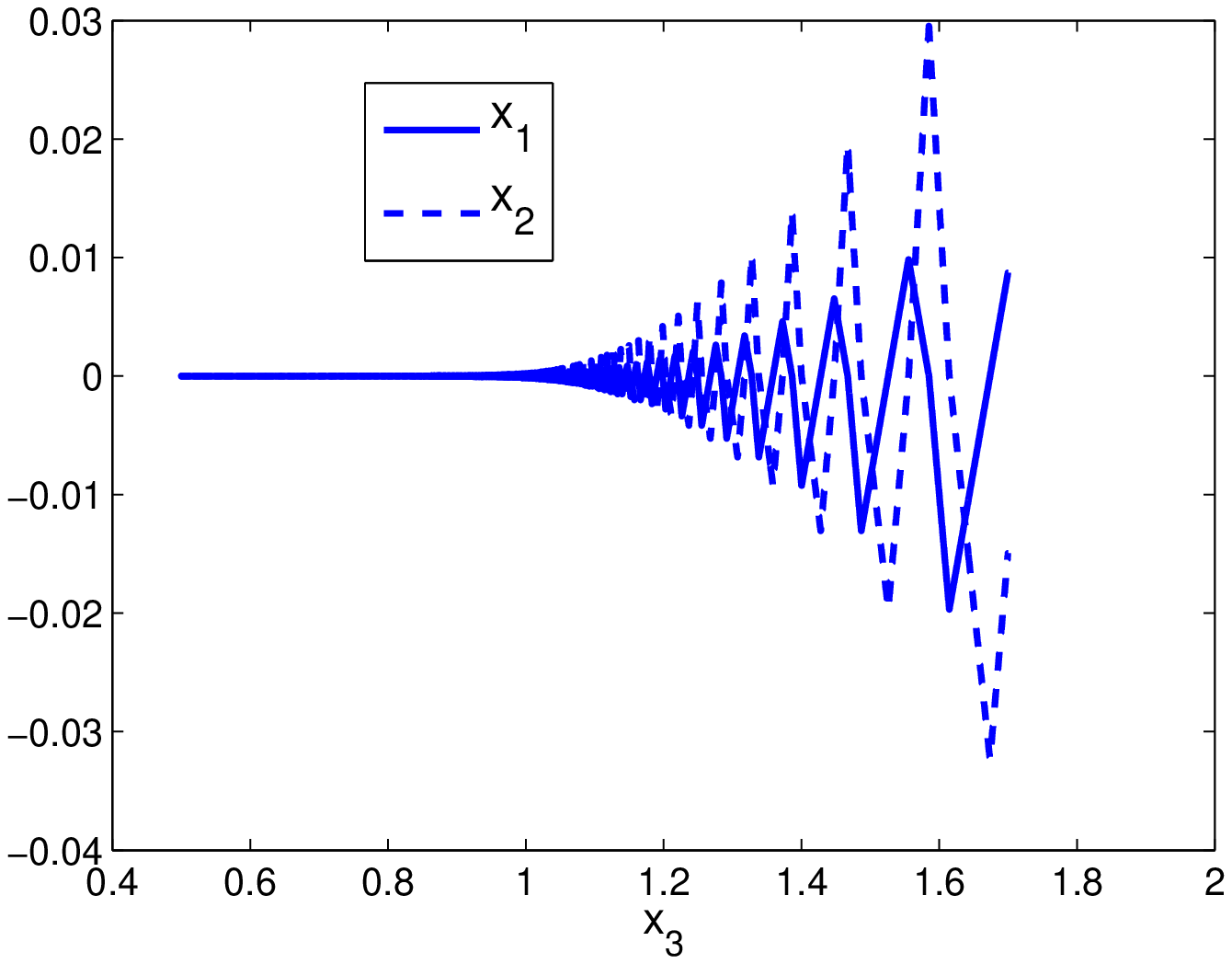} &
\includegraphics[width=150pt]{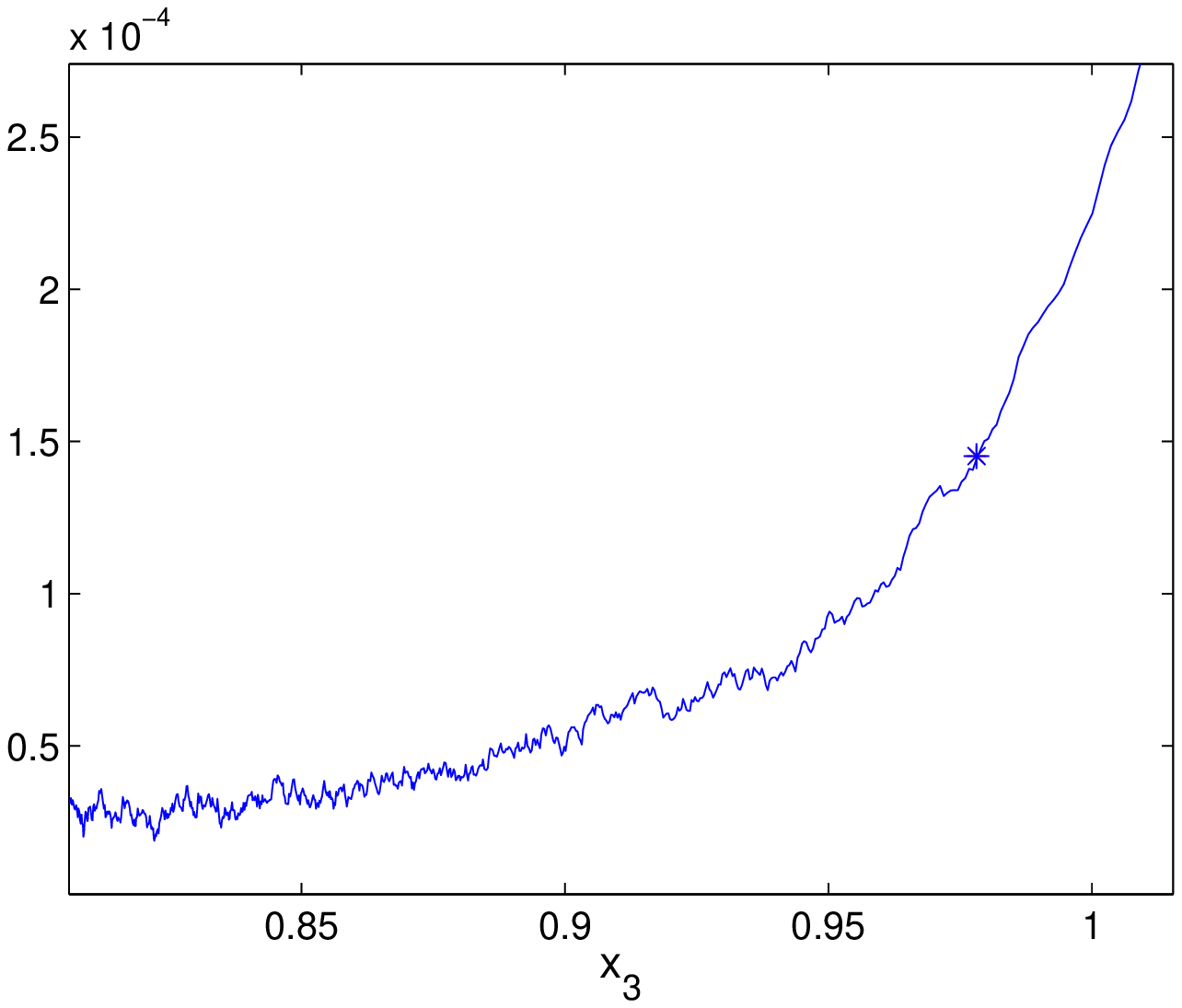}
\end{tabular}
\end{figure}
\item[b)]  Regularized Integration.
Next, consider integrating the regularized vector field \eqref{bilinear_reg} with 
$\alpha$ and $\beta$ as in \eqref{alp_bet_eq} and two different sets of parameters. 
We first consider 
$\epsilon_{\alpha}=\epsilon_{\beta}=10^{-4}$. For these parameters values, the equilibrium 
$(\alpha^*,\beta^*)$ of the fast system \eqref{fast_system} is stable up to $x_3 \simeq 1.41798$, 
then the eigenvalues of the Jacobian matrix at the equilibrium cross the imaginary axis. 
We compute the solution of the regularized system with initial condition 
$(10^{-4},10^{-4},0)$ in the time interval $[0,2]$ (so that $0 \leq x_3 \leq 2$) with 
{\tt ode23s}, {\tt ode15s} and {\tt ode45} with {\tt RelTol=AbsTol}$=10^{-12}$. 
The computed solutions show different behavior: the numerical solution computed with
{\tt ode23s} and {\tt ode15s} remain in a small neighborhood of $\Sigma$ for 
the whole time interval $[0,2]$, 
while the solution computed with {\tt ode45} moves away from $\Sigma$ for $x_3 \simeq 1.5$ 
as it is seen from the left plot of Figure \ref{spiral_ode23s_fig}. 
The average stepsize used by the stiff integrators is $O(10^{-2})$, 
while the one used by the explicit integrator is $O(10^{-4})$.
We then consider a second set of parameters $\epsilon_{\alpha}=10^{-4}$ and 
$\epsilon_{\beta}=10^{-3}$. The equilibrium $(\alpha^*,\beta^*)$  of the corresponding 
fast system \eqref{fast_system2} is unstable when $x_3>0$. 
On the right of Figure \ref{spiral_ode23s_fig} we plot the approximation computed with 
{\tt ode23s} (the approximation computed with {\tt ode45} behaves in a similar way). 
\begin{figure}
\caption{Example \ref{example_3}. Regularized integration with {\tt ode23s}.
Left: $\epsilon_{\alpha}=\epsilon_{\beta}=10^{-4}$.
Right: $\epsilon_{\alpha}=10^{-4}$ and
$\epsilon_{\beta}=10^{-3}$.}
\label{spiral_ode23s_fig}
\begin{tabular}{cc}
\includegraphics[width=150pt]{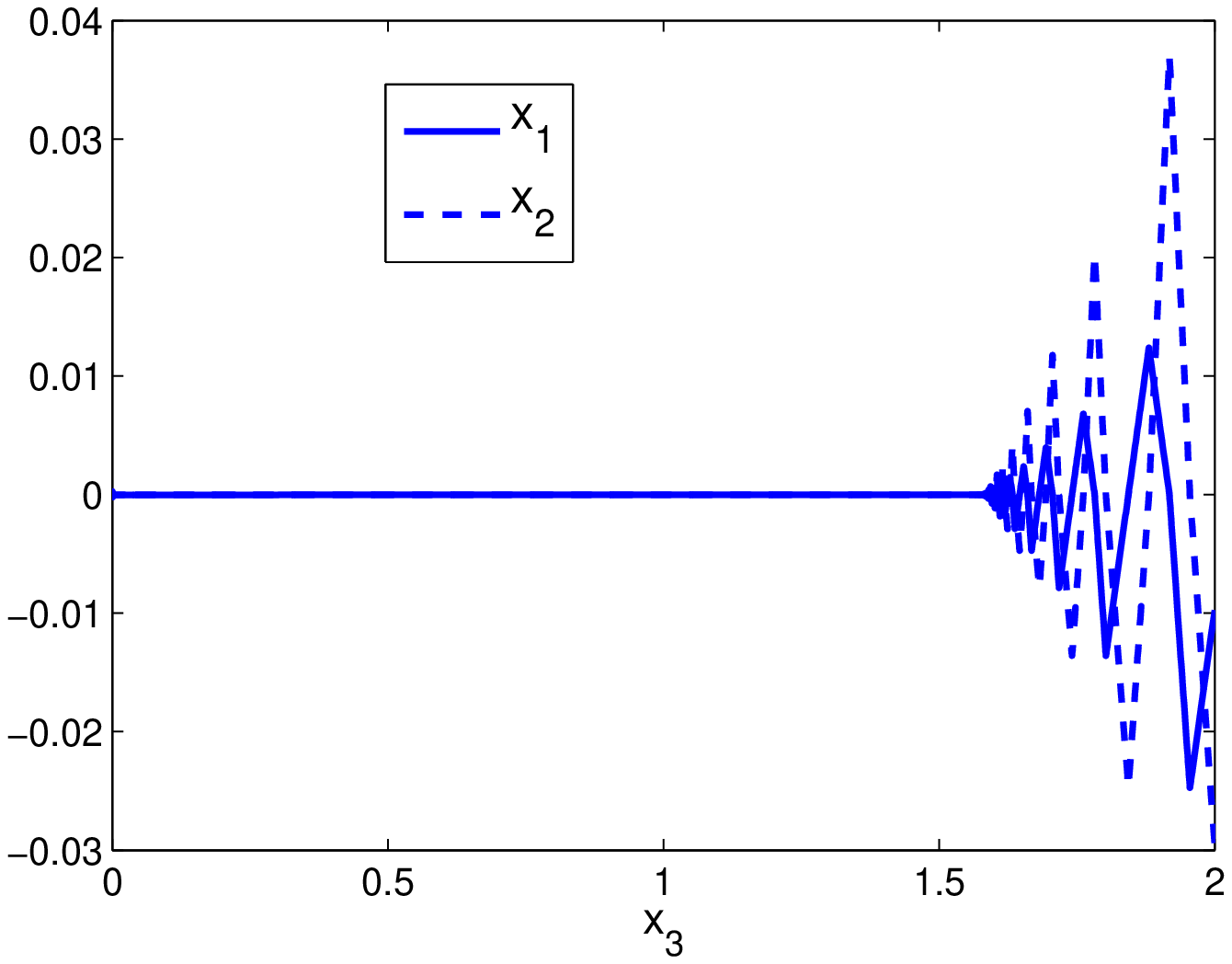} 
\includegraphics[width=150pt]{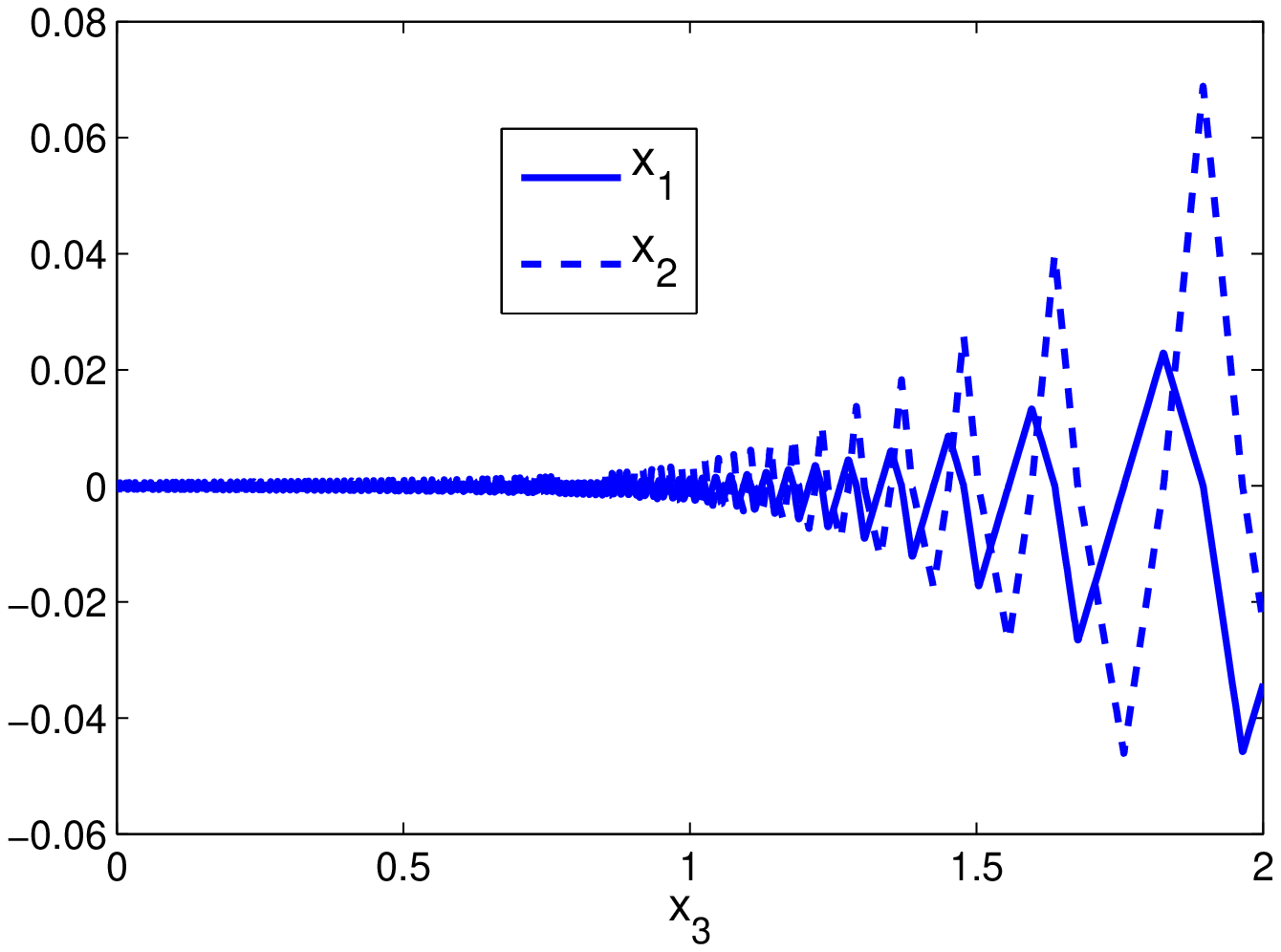} 
\end{tabular}
\end{figure}
\item[c)] Unregularized integration. The numerical solution obtained with the stiff 
Matlab integrator {\tt ode23s}, with
{\tt RelTol}$={\tt AbsTol}=10^{-7}$, stays close to $\Sigma$ up to $x_3=1$ and then it leaves 
$\Sigma$ to enter one of the $R_j$'s. For lower values of {\tt RelTol}, 
the integrator takes more than half an hour 
in the time interval $[0,1]$. The numerical solution obtained with {\tt ode15s} and 
{\tt RelTol}={\tt AbsTol}=$10^{-6}$ 
is very inaccurate as it is evident from the plot in Figure \ref{ode15s_fig}. 
Lower tolerances do not produce better results.
\begin{figure}
\label{ode15s_fig}
\caption{Example \ref{example_3}.  Unregularized integration with {\tt ode15s} and 
{\tt RelTol}={\tt AbsTol}=$10^{-6}$.}
\includegraphics[width=150pt]{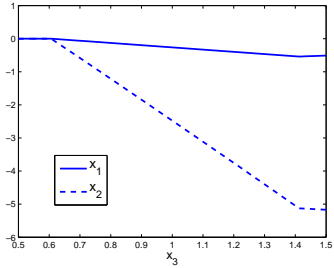}
\end{figure} 
\end{itemize}
\end{exam}

\vspace{.5cm}

\begin{exam}[Filippov's vector field on $\Sigma$ is ambiguous]\label{example_4}

We consider the following vector fields 
\begin{equation}\label{example_ambiguous}
\begin{split}
f_1=\begin{pmatrix} \frac 12 \\ 1 \\ -x_3+\frac 12 x_4 \\ x_4 \end{pmatrix}, \qquad \qquad 
        f_2=\begin{pmatrix} 1 \\ \frac 12 \\ -x_3+\frac 12 x_4 \\ x_4 \end{pmatrix} \\
       f_3=\begin{pmatrix} -(x_3-3)^2-(x_4-3)^2+5 \\ 1 \\ -x_3+28x_4 \\ x_4 \end{pmatrix}, \,\
        f_4= \begin{pmatrix} -1 \\ -1 \\ -x_3+4x_4 \\ x_4 \end{pmatrix},
\end{split}
\end{equation}  
and $\Sigma$ is the $(x_3,x_4)$ plane.
The circle $\gamma=\{x \in \R^4, (x_3-3)^2+(x_4-3)^2=4 \}$ (see Figure \ref{ambiguous_fig}), 
divides $\Sigma$ in two regions.  Outside $\gamma$, $\Sigma$ is attractive upon 
sliding along $\Sigma_1^+$ and $\Sigma_2^+$. 
For all points on $\gamma$, the vector field $f_{\Sigma_2}^+$ is tangent to $\Sigma$ and, 
inside $\gamma$, it points away from $\Sigma$, so that $\Sigma$ is not attractive from inside \
$\gamma$. We would expect the solution of \eqref{GeneralPWS} to leave $\Sigma$ once it reaches $\gamma$.  
Note that there is a family of Filippov sliding vector fields on $\Sigma$, namely:
$\dot x_1=\dot x_2=0$, $\dot x_3=-x_3+(16-13 \lambda)x_4$, $\dot x_4=x_4$, with 
$0 \leq \lambda \leq \frac 27$. 
\begin{itemize}
\item[(a)]  Random Euler.
The ambiguity of a Filippov sliding vector field is
clearly reflected in the numerical solutions computed with Random Euler.
In Figure \ref{ambiguous_fig} we plot the $x_3$ and $x_4$ components
of $1000$ trajectories computed with
$\tau=10^{-2}$, and same initial condition $(10^{-2},10^{-2},3,0.9)$. 
The dotted circle in the plot is the curve $\gamma$. The two bold darker lines are two 
sample trajectories. The shaded region is obtained by plotting all $1000$ trajectories. 
The plot suggests that the choice of random stepsizes
covers the region obtained by choosing one of the possible
vector fields in Filippov's differential inclusion.   
\begin{figure}
\caption{Example \ref{example_4}: plot in the plane $(x_3,x_4)$ of
$1000$ approximations obtained with Random Euler and 
$\tau=10^{-2}$, and initial condition $(10^{-2},10^{-2},3,0.9)$.}
\label{ambiguous_fig}
\includegraphics[width=150pt]{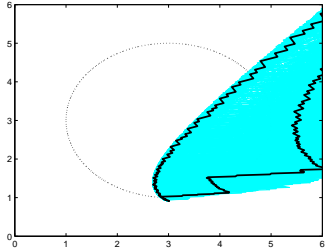}
\end{figure}
For completeness, in Figure \ref{ambiguous_one_traj_fig} we plot in function of 
time the first and second component of the average trajectory computed with Random Euler 
with $\tau=10^{-5}$. At $t \simeq 0.1067$, the third and fourth components of the average trajectory 
are on $\gamma$ and indeed the plotted solution leaves 
$\Sigma$ and starts sliding on $\Sigma_2^+$ at $t \simeq 0.12$.    
\begin{figure}\caption{Example \ref{example_4}. First and second component of 
the numerical solution computed with Random Euler with $\tau=10^{-5}$.}
\label{ambiguous_one_traj_fig}
\includegraphics[width=150pt]{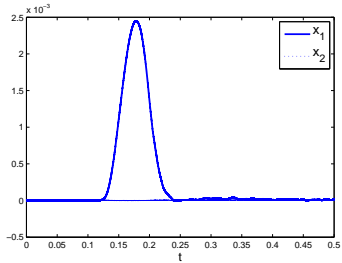}
\end{figure}
\item[b)]
Regularized Integration.
Here, the computed approximations of
the regularized system move away from $\Sigma$ once they reach $\gamma$. 
We note that $\gamma$ is a curve of saddle-node bifurcation values for 
\eqref{fast_system} for any parameter value $\epsilon_{\alpha}$ and/or $\epsilon_{\beta}$. 
On $\gamma$, $(\alpha^*,\beta^*)=(1,0.5)$ is a double root of \eqref{algebraic_eq}, while inside $\gamma$ 
there is no solution of \eqref{algebraic_eq}. 
\end{itemize}
\end{exam}

\section{Conclusions}\label{Concl}
In this work we have been interested in studying the behavior of solutions of piecewise smooth 
systems in the neighborhood of a co-dimension $2$ discontinuity surface $\Sigma$, 
intersection of two co-dimension $1$ discontinuity surfaces.
It has long been accepted that if solution trajectories cannot leave $\Sigma$ ($\Sigma$ is
attractive), some form
of sliding motion on $\Sigma$ should be taking place.  Precisely which sliding motion
has been the subject of much investigation, but it has not been our concern in this paper.
Our chief interest in this work has been trying to understand what should happen 
when $\Sigma$ loses attractivity (at generic first order exit points).  To our knowledge, 
this type of study had not been carried out before.

We took the point of view that the piecewise smooth system \eqref{GeneralPWS} was the
only information at our disposal, and treated this model with its own mathematical dignity.
Naturally, if \eqref{GeneralPWS} arose as a simplified
model for some other known differential system, then this original system
should ultimately guide the search for appropriate dynamics near $\Sigma$, and it may well
be that the dynamics of this ``true'' system are not matched by
those of \eqref{GeneralPWS}.  If this is the case, we should legitimately question the validity
of the model \eqref{GeneralPWS} in the first place.  On the other hand, in the absence of 
knowledge of an underlying ``true'' system, when \eqref{GeneralPWS} is the only datum we have, 
then we believe that we should try to modify this model so that the dynamics of the
modified system match those of \eqref{GeneralPWS}. 
 
To obtain information on the dynamics of \eqref{GeneralPWS}, we
proposed use of a simple Euler method with random steps, uniformly chosen with
respect to a reference, small, stepsize.  Our study unambiguously show that:
(i) when $\Sigma$ is attractive, solution trajectories remain near
$\Sigma$ (thereby validating an idealized sliding motion on $\Sigma$);
(ii) when $\Sigma$ loses attractivity, solution trajectories leave
a neighborhood of $\Sigma$.

Several other possibilities have also been considered in this work: regularization
techniques, plain and simple Euler method with fixed stepsize, and direct numerical
integration of \eqref{GeneralPWS} with sophisticated off-the-shelf solvers for differential 
equations.  None of these options satisfactorily resolved the dynamics of \eqref{GeneralPWS},
and often produced misleading behavior.  Ultimately, this occurred because each of these
choices either superimposed its own dynamics on those of \eqref{GeneralPWS} (as Euler method
and regularization techniques do, further producing different behaviors depending on
how the regularization is made), or just failed to produce reliable answers in too many cases
(this was the case with directly solving \eqref{GeneralPWS} with existing software,
where the outcome dramatically dependend on the solver used, or on the tolerances values,
or both).

Unfortunately, our conclusions are not fully satisfactory either.  Our analysis
tells us that is the dynamics of \eqref{GeneralPWS} around $\Sigma$ that
must be used to tell us what should happen in a neighborhood of $\Sigma$, but we know of
no general foolproof mean to regularize the system so that the regularized trajectory 
will be following the dynamics of \eqref{GeneralPWS}.  Perhaps, and --again-- as long
as the model \eqref{GeneralPWS} is appropriate, the most reliable and practically efficient
way to proceed is to accept some form of idealized sliding motion on $\Sigma$ as long as
$\Sigma$ is attractive, while also demanding that a sliding trajectory leaves $\Sigma$
when the latter loses its attractivity.  The construction of appropriate sliding
vector fields fulfilling these requests remains an outstanding and challenging task.

\vspace{.5cm}


\begin{thebibliography}{10}

\bibitem{AlexSeid1}
{\sc J.C. Alexander and T.~Seidman},
\newblock Sliding modes in intersecting switching surfaces, {I}: Blending.
\newblock {\em Houston J. Math.}, 24:545--569, 1998.

\bibitem{AlexSeid2}
{\sc J.C. Alexander and T.~Seidman},
\newblock Sliding modes in intersecting switching surfaces, {II}: Hysteresis.
\newblock {\em Houston J. Math.}, 25:185--211, 1999.

\bibitem{Artstein}
{\sc Z. Artstein},
\newblock On singularly perturbed ordinary differential equations with measure-valued limits
\newblock{\em Mathematics Bohemica}, 2: 27-31, 2001. 

\bibitem{Cortes} 
{\sc J.~Cortes},
\newblock Discontinuous Dynamical Systems: A tutorial on solutions, nonsmooth analysis, and stability
  {\em IEEE Control Systems Magazine},  28-3:36--73, 2008.
  
 \bibitem{DelBuono.Elia.Lopez}
 {\sc N.~Del Buono, C.~Elia and L.~Lopez }
 \newblock On the equivalence between the sigmoidal approach and Utkin's approach for models of gene 
 regulatory networks
 \newblock{\em SIAM J. Applied Dynamical Systems}, 13-3 (2014), pp. 1270-1292. 
 
\bibitem{DiBernardobook}
{\sc M.~di~Bernardo, C.J. Budd, A.R. Champneys, and P.~Kowalczyk}, 
\newblock {\em {P}iecewise-smooth {D}ynamical {S}ystems. {T}heory and
  {A}pplications}.
\newblock Applied Mathematical Sciences 163. Springer-Verlag, Berlin, 2008.

\bibitem{DieciSpiral} 
{\sc L.~Dieci},
\newblock Sliding motion on the intersection of two manifolds: {S}pirally attractive case.
{\em Communications in Nonlinear Science and Numerical Simulation}, 26 (2015), pp. 65-74

\bibitem{DiDif1}
{\sc L.~Dieci, and F.~Difonzo},
\newblock A {C}omparison of {F}ilippov sliding vector fields in co-dimension $2$.
  {\em Journal of Computational and Applied Mathematics}, 262 (2014), 161-179. 
\newblock 
Corrigendum in {\em Journal of Computational and Applied Mathematics}, 272 (2014),
pp. 273-273. 

\bibitem{DiDif2}
{\sc L.~Dieci, and F.~Difonzo},
\newblock The Moments sliding vector field on the intersection of two manifolds.
\newblock {\em Journal of Dynamics and Differential Equations}, to appear, 
\newblock 2015.  {\tt{DOI 10.1007/s10884-015-9439-9}}

\bibitem{DiElLo}
{\sc L.~Dieci, C.~Elia, and L.~Lopez},
\newblock A {F}ilippov sliding vector field on an attracting co-dimension 2
discontinuity surface, and a limited loss-of-attractivity analysis.
\newblock {\em  J. Differential Equations}, 254, (2013), pp. 1800--1832.

\bibitem{DiElLo2}
{\sc L.~Dieci, C.~Elia, and L.~Lopez},
\newblock Sharp sufficient attractivity conditions for sliding on a co-dimension 2 discontinuity surface.
\newblock {\em Mathematics and Computers in Simulations} 110-1, (2015), pp. 3--14.

\bibitem{del-fascio}
{\sc L.~Dieci, C.~Elia, and L.~Lopez},
\newblock Uniqueness of {F}ilippov sliding vector field on
the intersection of two surfaces in $\R^3$ and implications for
stability of periodic orbits. 
\newblock{\em J. Nonlin. Science}, to appear JNLS-D-14-00195.1 (2015), 
DOI: 10.1007/s00332-015-9265-6.

\bibitem{DiGu}
{\sc L.~Dieci and N.~Guglielmi}
\newblock Regularizing piecewise smooth differential systems: co-dimension 2
  discontinuity surface.
\newblock {\em J. Dynamics and Differential Equations}, 25:1, 71--94, 2013.

\bibitem{Dontchev.Lempio_1992}
{\sc A.~Dontchev and F.~Lempio},
\newblock Difference methods for differential inclusions: a survey. 
\newblock{\em SIAM REVIEW}, 34, No. 2, (1992), pp. 263-294. 

\bibitem{Filippov}
{\sc A.F. Filippov},
\newblock {\em Differential {E}quations with {D}iscontinuous {R}ight-{H}and
  {S}ides}.
\newblock Mathematics and Its Applications, Kluwer Academic, Dordrecht, 1988.

\bibitem{GuHairer}
{\sc N.~Guglielmi and E.~Hairer}
\newblock {\em Classification of hidden dynamics in discontinuous dynamical systems} 
\newblock SIADS, in press. 

\bibitem{Jeffrey1}
{\sc M. Jeffrey},
\newblock 
Dynamics at a switching intersection: hierarchy, isonomy, and multiple sliding,
\newblock {\em SIAM J. Applied Dyn. Systems}, 13:1082-1105, 2014.

\bibitem{Llibre.Silva.Teixeira}
{\sc J.~Llibre, P.~R. Silva, and M.~A. Teixeira},
\newblock Regularization of discontinuous vector fields on $\R^3$ via singular
  perturbation.
\newblock {\em J. Dynam. Differential Equations}, 19:309--331, 2007.

\bibitem{Machina.Edwards}
{\sc A.~Machina, R.~Edwards, and P.~van den Driessche},
\newblock Singular dynamics in gene network models
\newblock {\em SIAM J. Appl. Dyn. Syst.}, 12 (1): 95--125, 2013.

\bibitem{Plahte.Kjoglum_2005}
{\sc E.~Plahte, and  S.~Kj{}\'oglum},
\newblock Analysis and generic properties of gene regulatory networks with graded 
response functions
\newblock {\em Physica D}, 201 (1): 150--176, 2005.

\bibitem{Polynikis.Hogan.diBernardo}
{\sc A.~Polynikis, S.J.~Hogan, and M.~di Bernardo},
\newblock Comparing different ODE modelling approaches for gene regulatory networks,
\newblock {\em Journal of Theoretical Biology}, 261(4): 511--530, 2009.

\bibitem{Seid1}
{\sc T.~Seidman},
\newblock Some limit results for relays.
\newblock {\em Proc.s of World Congress of Nonlinear Analysts}, Volume 1.  Ed. 
V. Lakshmikantham,  De Gruyter publisher (1996), pp. 787--796.

\bibitem{Seid2}
{\sc T.~Seidman},
\newblock The residue of model reduction.
The  residue  of  model  reduction,
\newblock In Hybrid Systems III. Verification and
Control,  
\newblock {\em {Lecture Notes in Comp. Sci.}} 1066;  R.  Alur, 
T.A.  Henzinger,  E.D.  Sontag,  eds.
pp. 201-207, Springer-Verlag, Berlin (1996).

\bibitem{SotomayorTeixeira}
{\sc J.~Sotomayor and Teixeira M.A.},
\newblock Regularization of discontinuous vector field.
\newblock In {\em International Conference on Differential Equations}, pp.
  207--223, 1996.

\bibitem{Utkin}
{\sc V.I. Utkin},
\newblock {\em {S}liding {M}odes and {T}heir {A}pplication in {V}ariable
  {S}tructure {S}ystems}.
\newblock MIR Publisher, Moskow, 1978.

\bibitem{Utkin.2}
{\sc V.I. Utkin},
\newblock {\em {S}liding {M}ode in {C}ontrol and {O}ptimization}.
\newblock Springer, Berlin, 1992.


\end{thebibliography}
\end{document}